\newcommand{\R}{\mathbb{R}}
\newcommand{\al}{\alpha}
\newcommand{\de}{\delta}
\newcommand{\e}{\varepsilon}
\newcommand{\fy}{\varphi}
\newcommand{\te}{\theta}
\newcommand{\s}{\sigma}
\newcommand{\y}{\eta}
\newcommand{\z}{\zeta}
\newcommand{\ro}{\rho}
\newcommand{\De}{\Delta}
\newcommand{\p}{\partial}
\newcommand{\Ca}{\bigcap}
\newcommand{\lec}{\lesssim}
\newcommand{\I}{\infty}
\newcommand{\LR}[1]{{\langle #1 \rangle}}
\newcommand{\EQ}[1]{\begin{equation}\begin{split} #1 \end{split}\end{equation}}
\newcommand{\BR}[1]{\left[#1\right]}
\newcommand{\Del}[1]{}
\newcommand{\CAS}[1]{\begin{cases} #1 \end{cases}}
\newcommand{\pt}{&}
\newcommand{\pr}{\\ &}
\newcommand{\pq}{\quad}
\newcommand{\pn}{}
\numberwithin{equation}{section}
\newtheorem{thm}{Theorem}[section]
\newtheorem{lem}[thm]{Lemma}
\newtheorem{prop}[thm]{Proposition}
\theoremstyle{remark}
\newtheorem{rem}{Remark}
\begin{document}
\title{Non-uniqueness for an energy-critical heat equation on $\R^2$}
\author{Slim Ibrahim, Hiroaki Kikuchi, Kenji Nakanishi, Juncheng Wei}
\begin{abstract}
We construct a singular solution of 
a stationary nonlinear Schr\"{o}dinger equation on $\R^2$ with square-exponential nonlinearity having linear behavior around zero. 
In view of Trudinger-Moser inequality, this type of nonlinearity has an energy-critical growth. We use this singular solution to prove non-uniqueness of strong solutions for the Cauchy problem of the corresponding semilinear heat equation. 
The proof relies on explicit computation showing a regularizing effect of the heat equation in an appropriate functional space.  
\end{abstract}
\maketitle
\section{Introduction}

In this paper, we consider the Cauchy problem 
for the following semilinear heat equation 
\EQ{ \label{eq-Heat}
\begin{cases}
\dot  u - \Delta u = f(u) & 
\qquad \mbox{in $(0, \infty) \times \mathbb{R}^{d}$}, \\
u(0, x) = u_{0}(x)& \qquad \mbox{in $\mathbb{R}^{d}$}, 
\end{cases}}
where $u(t, x) : (0, \infty) \times \mathbb{R}^{d} \to \mathbb{R}$, $d\geq2$
is the unknown function, $f$ is the nonlinearity and $u_{0}\in L^q(\R^d)$ with $1\leq q\leq\infty$ is the given initial data. 

It is well known that when $f$ is $C^1(\R,\R)$, the Cauchy problem  \eqref{eq-Heat} possesses a unique classical solution if the initial data $u_0\in L^\infty(\R^d)$.
For unbounded initial data, this equation has been studied intensively since the pioneering
work of Weissler~\cite{Weissler1}. For instance, in the pure power case i.e. $f(u) = |u|^{p-1}u\; (p>1)$, equation \eqref{eq-Heat} becomes scale invariant, that is, if $u(t, x)$ satisfies \eqref{eq-Heat}, 
then so does  
\EQ{
u_{\lambda}(t, x) = \lambda^{\frac{2}{p-1}}u(\lambda^{2}t, \lambda x)
}
for any $\lambda>0$. 
Moreover, if one defines the index
\EQ{
q_{c} = \frac{d(p-1)}{2}, 
}
then the Lebesgue space $L^{q_{c}}(\mathbb R^d)$ becomes invariant and 
we have $\|u_{\lambda}\|_{L^{q_{c}}} = \|u\|_{L^{q_{c}}}$ for all $\lambda>0$. 
The {\it critical} exponent $q_{c}$ then plays an important role 
for the well-posedness of the Cauchy problem. Indeed, first recall Weissler's  result ~\cite{Weissler1} concerning the 
subcritical case $q > q_{c}$ or the critical case $q=q_c>1$. 
Weissler proved that for any $u_{0} \in L^{q}(\mathbb{R}^{d})$, 
there exists a local time $T = T(u_{0})>0$ and 
a solution $u \in C([0, T); L^{q}(\mathbb{R}^{d})) 
\cap L_{\text{loc}}^{\infty}((0, T); L^{\infty}(\mathbb{R}^{d}))$ to 
\eqref{eq-Heat}. 
After that,  Brezis and Cazenave~\cite{Brezis-Cazenave} proved the {\it unconditional} uniqueness of 
Weissler's solutions i.e. solution is unique in $ C([0, T); L^{q}(\mathbb{R}^{d}))$ when the subcritical case $q>q_c$, $q\geq p$ or 
the critical case $q=q_c>p$.  In the {\it supercritical} case $q < q_{c}$, 
Weissler~\cite{Weissler1}, and Brezis and Cazenave~\cite{Brezis-Cazenave} 
indicated that, for a specific initial data, there is no local solution in any reasonable weak sense. 
Moreover, Haraux and Weissler~\cite{Haraux-Weissler} 
proved non-uniqueness of the trivial solution %for the initial data $u_{0} = 0$, the uniqueness does not hold in 
in $C([0, T); L^{q}(\mathbb{R}^{d})) \cap L_{\text{loc}}^{\infty}
((0, T); L^{\infty}(\mathbb{R}^{d}))$  when $1 + 2/d < p < (d+2)/(d-2)$. 

\par
In the critical case $q = q_{c}$ and $d \geq 3$, when $q_{c} > p-1$,  
Weissler~\cite{Weissler1} proved the existence of solutions
to \eqref{eq-Heat} and 
Brezis and Cazenave~\cite{Brezis-Cazenave} obtained the {\it unconditional} uniqueness of the solutions. 
In the double critical case of $q = q_{c} = (p-1) (= d/(d-2))$, 
Weissler~\cite{Weissler2} proved the conditional well-posedness (uniqueness in a subspace of $C([0, T); L^{q}(\mathbb{R}^{d}))$). 
In the case where the underlying space is the ball of $\mathbb{R}^{d}$ 
with Dirichlet boundary condition,  
Ni and Sacks~\cite{Ni-Sacks} showed that the unconditional 
uniqueness fails, while Terraneo~\cite{Terraneo} and Matos and Terraneo~\cite{Matos-Terraneo} 
extended this result to the entire space $\mathbb{R}^{d} \; (d \geq 3)$. 
\par
We note that the critical exponent $q_{c}$ is also important 
for the blow-up problem \eqref{eq-Heat}. 
Let $u_{0} \in L^{\infty}(\mathbb{R}^{d})$ and 
$T(u_{0})$ be the maximal existence of the time of the classical 
solution $u$. 
It is known that if $T(u_{0}) < \infty$, the solution $u$ satisfies 
$\lim_{t \to T} \|u(t)\|_{L^{\infty}} = \infty$ and 
we say that the solution $u$ blows up in finite time and 
$T(u_{0})$ is called the blow-up time of $u$. 
In particular, the {\it critical $L^{q_{c}}$ norm blow-up problem} 
has attracted attention for a long time. 
Namely, it is a question whether the solution 
satisfies 
\begin{equation}
\sup_{t \in (0, T)} \|u\|_{L^{q_{c}}} = \infty
\end{equation}
or not when $T(u_{0}) < \infty$. 
Concerning this problem, 
Mizoguchi and Souplet~\cite{Mizoguchi-Souplet} 
recently showed that 
if $u$ is a type I blow-up solution of \eqref{eq-Heat}, that is, 
$\limsup_{t \to T} (T - t)^{1/(p-1)} \|u(t)\|_{L^{\infty}} < \infty$, 
then we have $\lim_{t \to T} \|u(t)\|_{L^{q_{c}}} = \infty$. 
\par
Now, let us pay our attention to the two dimensional case. 
When we consider the two space dimensions, 
we see that any exponent $1 < p < \infty$ is subcritical, and thanks to the result of Weissler~\cite{Weissler1}, we have the local well-posedness of the Cauchy  problem in $L^{q}(\mathbb{R}^{2})\; (1 < p<q < \infty)$. 
%
%
%
%double critical case 
%($q = q_{c} = (p-1)$) in , 
%$q_{c}$ becomes infinite. 
%We see that any exponent $1 < p < \infty$ belongs 
%to the subcritical case.  
%Thus, from the result of Weissler~\cite{Weissler1}, 
%we can prove the local well-posedness of the Cauchy 
%problem in $L^{p}(\mathbb{R}^{2})\; (1 < p < \infty)$. 
%
%
%
%
%
%
However, for exponential type nonlinearities, like 
$f(u) = \pm u(e^{u^{2}} -1)$, 
the result of Weissler~\cite{Weissler1} is not 
applicable for any Lebesgue space $L^{q}(\mathbb{R}^{2}) (1 < q < \infty)$. 
%because one has to consider $u_0\in L^{\infty}(\mathbb{R}^{2})$. 
On the other hand, we can show the local well-posedness for 
$u_{0} \in L^{\infty}(\mathbb{R}^{2})$, as we mentioned first.
However, $L^{\infty}(\mathbb{R}^{2})$ is the subcritical space 
for the problem \eqref{eq-Heat} with $d=2$ and exponential type 
nonlinearities. 
Therefore, one can wonder if there is any notion of criticality 
in two space dimensions.
In this regard, Ibrahim, Jrad, Majdoub and Saanouni~\cite{IJMS}
have considered the following problem in two space dimensions, 
\EQ{ \label{eq-Heat-expo}
\begin{cases}
\dot  u - \Delta u =f_{0}(u) := \pm u(e^{u^{2}} -1)& 
\qquad \mbox{in $(0, \infty) \times \mathbb{R}^{2}$}, \\
u(0, x) = u_{0}(x)& \qquad \mbox{in $\mathbb{R}^{2}$}.  
\end{cases}}
The nonlinearity $f_{0}(u)$ has an energy-critical growth in view of Trudinger-Moser inequality. 
In \cite{IJMS}, the authors proved the local existence and uniqueness 
in $C([0,T],H^1(\mathbb{R}^{2}))$ of the solution to \eqref{eq-Heat-expo} with the 
initial data $u_{0} \in H^{1}(\mathbb{R}^{2})$. 

On the other hand, it is expected that the problem \eqref{eq-Heat-expo} 
for the heat equation can be
solved in spaces which are defined by an integrability of functions such as the Orlicz space as an
extension of the class of Lebesgue spaces. 
Ruf and Terraneo~\cite{Ruf-Terraneo}
showed the local existence of a solution 
to \eqref{eq-Heat-expo} for small initial data in 
the Orlicz space $\text{exp}L^2$ defined by
$$
\text{exp}L^2(\R^2):=\{u\in L^1_{loc}(\R^2):\;\text{such that}\; \int_{\R^2}\exp( u^2/\lambda^2)-1\;dx<\infty, \;\text{for some}\;\lambda>0\}
$$ 
endowed with Luxemburg norm
$$
\|u\|_{\text{exp}L^2}:=\inf\{\lambda>0: \int_{\R^2}\exp( u^2/\lambda^2)-1\;dx\leq 1\}.
$$

Subsequently, Ioku~\cite{Ioku} has shown that these local solutions are indeed global-in time, and the behavior of $f_0(u)\sim u^m$ near $u\sim0$ with $m\geq3$ was important in his result. 
Later on, 
Ioku, Ruf and Terraneo~\cite{IRT} proved that local solutions do not exist for the initial data
\EQ{
\label{non exi ID}
u_*(r)=a (-\log r)^\frac12,\;\text{for}\; r\leq1,\;\; a\gg 1,\;\text{and}\; u_*(r)=0,\;\text{elswhere}
}
that belongs to the Orlicz space, while they showed the local 
well-posedness (local existence and uniqueness) 
for initial data in the subclass of the Orlicz space
$$
\text{exp}L^2_0(\R^2):=\{u\in L^1_{loc}(\R^2):\;\text{such that}\; \int_{\R^2}\exp(\alpha u^2)-1\;dx<\infty, \;\text{for every}\;\alpha>0\}
$$ 
\par
The aim of this paper is to 
%argue that the Orlicz space $\text{exp}L^2$ is critical for the Cauchy problem \eqref{eq-Heat-expo} in the sense that one looses uniqueness of solutions whenever the size of the initial data is not too large. Indeed, we will 
construct an explicit initial data, with neither small nor too large norm in Orlicz space, for which two corresponding distinct solutions exist. The idea is to use the concept of {\it singular solutions} as in Ni and Sacks \cite{Ni-Sacks}.
Before stating our result, let us recall the strategy 
of the proof of \cite{Ni-Sacks}. 
They first construct a singular static solution 
$\phi_{*}$ to \eqref{eq-Heat} in the unit ball. 
Namely, $\phi_{*}$ satisfies 
following:
 \EQ{ \label{eq-static}
\begin{cases}
- \Delta \phi = f(\phi) & \qquad \mbox{in $B_{1}$}, \\
\phi = 0 \quad \mbox{on $\partial B_{1}$}, &\quad 
\lim_{x \to 0} \phi(x) = \infty,  
\end{cases}}
where $B_{1}$ is the unit ball in 
$\mathbb{R}^{d}\; (d \geq 3)$ and $f(\phi) = |\phi|^{p-1}\phi$. 
Then, they showed that there exists a regular solution 
$u_{\text{R}}$ to the Dirichlet problem corresponding to \eqref{eq-Heat} with $u(0, x) = \phi_{*}$. 
Setting $u_{\text{S}} = \phi_{\text{S}}$, 
we see that both of 
$u_{\text{S}}$ and $u_{\text{R}}$ are solutions to \eqref{eq-Heat}, 
but $u_{\text{S}} \neq u_{\text{R}}$
because $u_{\text{R}} \in L^{\infty}(B_{1})$ 
while $u_{\text{S}} \notin L^{\infty}(B_{1})$
(the subscripts $S$ and $R$ indicating `singular' and `regular' solutions). 
For the entire space $\mathbb{R}^{d}$, 
Terraneo~\cite{Terraneo}
constructed a solution $\phi_{*} \in C^{2}(\mathbb{R}^{d} \setminus 
\{-x_{0}, x_{0}\})$ to the equation
\begin{equation} \label{eq-static-entire}
\begin{cases}
- \Delta \phi =  f(\phi) & \qquad \mbox{in $\mathbb{R}^{d}$}, \\
\lim_{|x| \to \infty}\phi(x) = 0 &\quad 
\end{cases}
\end{equation}
such that
%\begin{equation}
$\limsup_{x \to x_{0}} \phi(x) = \infty$ and 
$\liminf_{x \to -x_{0}} \phi(x) = -\infty$, 
%\end{equation}
where $d \geq 3, x_{0} = (1, \ldots, 1)$ and $f(\phi) = |\phi|^{p-1} \phi$. 
However, we cannot apply the method of \cite{Ni-Sacks} nor of \cite{Terraneo}
to two dimensional entire space $\mathbb{R}^{2}$ 
for $f_{0}(u) = (e^{u^{2}} - 1)u$ directly. 
Indeed, it seems that there is no nontrivial solution to 
the equation \eqref{eq-static-entire} with $f(u) = (e^{u^{2}} - 1)u$ 
which decays at infinity 
because it is two dimensional massless equations.  
For this reason, we consider 
\EQ{ \label{eq-Heat-exp-m}
\CAS{
\dot  u - \Delta u = f_{m}(u)
& \mbox{in $(0, \infty) \times \mathbb{R}^{2}$}, \\
u(0, x) = u_{0}(x) & \mbox{in $\mathbb{R}^{2}$},
}}
where 
our nonlinearity $f_{m}$, which depends on $m >0$, satisfies 
\EQ{ \label{asy-non}
\lim_{u\to\infty} \frac{f_m(u)}{(e^{u^2}-1)u} = 1, \qquad 
  \lim_{u\to 0} \frac{f_m(u)}{m u} = -1. 
}
See \eqref{expo-nonlinear} below for the precise form of $f_{m}$. 
Here, we would like to stress that the essential characterization 
of the asymptotic form of our nonlinearity $f_{m}$ at infinity and $0$ 
is just given by \eqref{asy-non}. 
Let $X$ be the Fr\'echet space defined as the intersection of Banach spaces
\EQ{
 X:=\Ca_{1\le p<\I} L^p(\R^2).} 
Then, our main result is the following: 
\begin{thm} \label{main-thm}
There exist a positive mass $m_{*}>0$, 
a nonlinearity $f_{m_*}$ satisfying \eqref{asy-non}, 
an initial condition 
$\varphi_{*} \in X$ and a time $T = T(\varphi_{*}) >0$ 
such that the Cauchy problem \eqref{eq-Heat-exp-m}  
has two distinct strong solutions $u_{S}$ and 
$u_{R}$ in the sense of Duhamel formula: 
\EQ{
u(t) = e^{t \Delta} \varphi_{*} 
+ \int_{0}^{t} e^{(t-s)\Delta}f_{m_{*}}(u(s))ds \qquad
\mbox{in $C([0, T), X)$}.   
} 
\end{thm}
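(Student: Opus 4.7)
The plan follows the Ni--Sacks philosophy adapted to our 2D exponential-type setting: find a singular stationary solution $\phi_*$, set $\varphi_* := \phi_*$, and exhibit both the time-independent solution $u_S\equiv\phi_*$ and a second, instantly smoothed Duhamel solution $u_R$ from the same datum. The first and hardest task is therefore to build a radial $\phi_* \in X$, smooth on $\R^2\setminus\{0\}$, solving $-\Delta \phi_* = f_{m_*}(\phi_*)$ classically away from the origin, unbounded at the origin, and decaying at infinity. The two asymptotic conditions in \eqref{asy-non} are calibrated precisely for this: at spatial infinity the equation linearizes to the massive Helmholtz operator $-\Delta + m_*$, whose radial decaying solutions behave like $r^{-1/2}e^{-\sqrt{m_*}r}$, so any solution matching this asymptotic lies in $L^p(\R^2)$ for every $p\in[1,\infty)$; near the origin, the growth $f_m(u)\sim e^{u^2}u$ supports a logarithmic singularity. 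A matched-asymptotic analysis in the Emden--Fowler variable $s=-\log r$ with the ansatz $\phi^2(r)=2s+\psi(s)$ turns the equation into $-\psi_{ss}+(2+\psi_s)^2/(2\phi^2)=2\phi^2 e^{\psi}$ and forces $\psi(s)=-2\log s+O(1)$, giving the prediction
\EQ{
\phi_*(r) \;\sim\; \sqrt{-2\log r - 2\log(-\log r)}\quad\text{as}\ r\to 0^{+},
}
which lies in $L^p_{\mathrm{loc}}$ for every finite $p$ but is unbounded. To realize $\phi_*$ rigorously I would run a shooting argument: parametrize the one-parameter family of solutions blowing up at $r=0$ with this asymptotic and the one-parameter family of exponentially decaying solutions at $r=\infty$, then tune the mass $m$ so that the two families connect. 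This ODE connection problem is the principal technical difficulty, and it simultaneously fixes $m_*$ and pins down the precise form of $f_{m}$ in \eqref{expo-nonlinear}.

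\textbf{Step 2: singular Duhamel solution.} A direct asymptotic computation from the expression for $\phi_*$ in Step 1 gives $f_{m_*}(\phi_*) \sim C\, r^{-2}(-\log r)^{-3/2}$ near the origin and exponential decay at infinity, so $f_{m_*}(\phi_*)\in L^1(\R^2)$ and the logarithmic singularity is too mild to produce any distributional mass at $0$; hence $\phi_*$ solves $-\Delta\phi_* = f_{m_*}(\phi_*)$ distributionally on all of $\R^2$. Taking $u_S(t,x):=\phi_*(x)$, a direct heat-kernel manipulation then shows $u_S$ satisfies the Duhamel identity in $X$, and plainly $u_S(t)\notin L^\infty(\R^2)$ for every $t\ge 0$.

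\textbf{Step 3: regularized solution $u_R$.} Approximate $\varphi_*$ by truncations $\varphi_n := \min(\varphi_*,n)\in L^\infty\cap X$, obtain classical solutions $u_n$ of \eqref{eq-Heat-exp-m} on a uniform time interval from the standard $L^\infty$-theory, and pass to the limit. The crucial analytic step is a quantitative regularizing estimate: using the decomposition $f_{m_*}(u)=-m_*u+g(u)$ with $g$ vanishing to higher order at $0$ and growing like $e^{u^2}u$ at infinity, combined with sharp heat-kernel bounds, one shows that for sufficiently small $T>0$ the Duhamel iteration
\EQ{
u^{(k+1)}(t) = e^{t\Delta}\varphi_* + \int_0^t e^{(t-s)\Delta} f_{m_*}(u^{(k)}(s))\,ds
}
closes in a norm that forces the limit $u_R(t)$ to be bounded for every $t\in(0,T)$. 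The key observation is that $e^{t\Delta}\varphi_*\in L^\infty(\R^2)$ for $t>0$, because the heat kernel dampens the $\sqrt{\log(1/|x|)}$ singularity (Young's inequality, since $\varphi_*\in L^1$), and this $L^\infty$-control is propagated through the iteration provided $T$ is small enough to absorb the exponential-type nonlinearity. This quantitative smoothing estimate is the ``explicit computation'' advertised in the abstract and, together with Step 1, is where the main effort lies.

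\textbf{Step 4: non-uniqueness.} Since $u_S\equiv\phi_*$ is unbounded on $\R^2$ for every $t\ge 0$, while $u_R(t)\in L^\infty(\R^2)$ for every $t\in(0,T)$ by Step 3, the two Duhamel solutions differ on $(0,T)$, which yields the theorem.
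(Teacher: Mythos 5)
Your Steps 1, 2, and 4 are in substance what the paper does: the Emden--Fowler reduction you write (with $s=-\log r$ and $\phi^2=2s+\psi$) yields exactly the singular asymptotic the paper proves, including, if you track the constant, the $-2\log 2$ term; the nonlinearity is likewise engineered as $f_m(s)=s(e^{s^2}-1)-m\chi(s)s$ with a cutoff $\chi$ and $m_*$ is determined by a shooting/energy argument; and the distributional validity of $-\Delta\phi_*=f_{m_*}(\phi_*)$ and the trivial verification of the Duhamel identity for $u_S\equiv\phi_*$ are as you describe.

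The gap is in Step 3, and it is not a small one. You propose to bound $e^{t\Delta}\varphi_*$ in $L^\infty$ via Young's inequality with $\varphi_*\in L^1$, and then ``propagate this through the iteration.'' But Young's inequality with $\varphi_*\in L^1$ only gives $\|e^{t\Delta}\varphi_*\|_{L^\infty}\lesssim t^{-1}$, and then $e^{(e^{t\Delta}\varphi_*)^2}$ is of order $e^{Ct^{-2}}$, which is nowhere near integrable on $(0,T)$; the Duhamel integral diverges and the iteration does not close on any time interval. What the argument actually needs --- and what constitutes the ``explicit computation'' the abstract refers to --- is the pointwise inequality
\EQ{
 e^{t\Delta}\varphi_* \;\le\; \min\bigl\{\varphi_*(\sqrt{t}),\,\varphi_*(|x|)\bigr\}+O(|\log t|^{-1/2}),
}
proved by splitting space-time into the parabolic cylinder $|x|^2\lesssim t$, an intermediate zone, and the exterior region, and using the mean value theorem with the precise derivative asymptotic of $\varphi_*$. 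Combined with the \emph{second-order} expansion $\varphi_*(r)^2=\rho-2\log\rho+O(\rho^{-1}\log\rho)$ (where $\rho=2|\log r|$), this gives $e^{(e^{t\Delta}\varphi_*)^2}\lesssim\min\{(t\ell^2)^{-1},(r^2\rho^2)^{-1}\}$ with $\ell=|\log t|$. The extra $\ell^{-2}$, which comes exactly from the subleading $-2\log\rho$ in the square of the singular profile, is what makes $\int_0^t e^{(t-s)\Delta}f_0(u_0)\,ds=O(\ell^{-3/2})$ and lets the fixed-point close in $|\log t|^{-1/2}L^\infty$. Your proposal does not identify this mechanism; without it, even the refined convolution bound $\|e^{t\Delta}\varphi_*\|_{L^\infty}\lesssim|\log t|^{1/2}$ (obtainable by optimizing Young's inequality over $L^p$) is still not sharp enough, since it would only give $e^{u_0^2}\lesssim 1/t$, which is not integrable. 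A secondary, more minor issue is that you build $u_R$ by truncating the initial data and passing to the limit, which requires a uniform-in-$n$ version of the same estimate and is therefore not easier; the paper instead runs the contraction directly starting from $u_0=e^{t\Delta}\varphi_*$.
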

\begin{rem}
\begin{enumerate}
\item[\rm (i)]
To prove Theorem \ref{main-thm}, we construct a singular stationary 
solution $\varphi_{*}$ to \eqref{eq-Heat-exp-m}. 
Here, by a singular stationary solution, we mean a 
time independent function
which satisfies the equation \eqref{eq-Heat-exp-m} 
in the distribution sense on the whole domain and
diverges at some points.
The result seems to be of independent interest.  
\item[\rm (ii)] 
We essentially use the condition $\lim_{u \to 0}f_{m}(u)/mu = -1$ 
only for the decay of 
a singular stationary soliton $\varphi_{*}$ to \eqref{eq-Heat-exp-m}, 
that is, $\lim_{|x| \to \infty} \varphi_{*}(x) = 0$. 
The other argument in our proof works even without the condition. 
%Although the term might be artificial, 
%it seems that this is the first result of 
%non-uniqueness for the exponential nonlinearity. 
%Thus, the result in this paper seems significant to promote 
%the study in this direction. 
It is a challenging problem to study whether non-uniqueness still 
hold without the condition or not. 
\end{enumerate}
\end{rem}
\begin{rem}
The problem of uniqueness of solutions for PDEs is a classical and old issue that can be tricky sometimes. It has caught a special attention in the last few years. 
Among many others, one can refer to the works of 
De Lellis and Sz\'ekelyhidi~\cite{DS} showing non-uniqueness 
of very weak solutions to Euler problem. 
Their proof relies on convex integration techniques, which  more recently, have been upgraded to show non-uniqueness of weak solutions of Navier-Stokes system thanks to the pioneer work of Buckmaster and Vicol \cite{BV}. 
Davila, Del Pino and Wei~\cite{DDW} constructed non-unique weak
solutions for the two-dimensional harmonic map, by attaching reverse bubbling.
In \cite{GGM}, Germain, Ghoul and Miura investigated the genericity of the non-unique solutions of the supercritical heat flow map. 
%In this paper, we do not address such issues. 
\end{rem}

This paper is organized as follows. 
In Section \ref{singular-soliton}, 
we construct a singular static soliton 
$\varphi_{*}$ to \eqref{eq-Heat-exp-m}. 
To this end, we first prove the existence of a singular soliton 
$\phi_{*}$ to 
\eqref{eq-static} with $f(\phi) = e^{\phi^{2}}(\phi - 1)$ 
in the ball in $\mathbb{R}^{2}$, 
following Merle and Peletier~\cite{Merle-Peletier}. 
Then, we seek the singular stationary soliton $\varphi_{*}$ 
to \eqref{eq-Heat-exp-m} by the shooting method. 
In Section \ref{regular-sol}, we shall show the existence of a regular solution to 
\eqref{eq-Heat-exp-m} with $u|_{t=0} = \varphi_{*}$ 
by the heat iteration and give a proof of Theorem \ref{main-thm}. 
In Appendix, we show a monotonicity property of solution to the linear 
heat equation. 

\section{Construction of singular soliton} \label{singular-soliton}
\subsection{Singular stationary solution on some disk}
In this subsection, we construct a singular solution 
to the following elliptic equation on a disk
\EQ{\label{eq-expo}
\begin{cases}
- \Delta u = u(e^{u^{2}} -1 ) \qquad 
& \mbox{in $B_{R}$}, \\
u = 0 \qquad & \mbox{on $\partial B_{R}$},  
\end{cases}
} 
where $R > 0$. 
More precisely, we shall show the following: 
\begin{thm} \label{thm-singular-disk}
There exist $R_{\infty}>0$ and a singular solution 
$u_{\infty} \in C^{\infty}(B_{R_{\infty}} \setminus \{0\})$ 
to \eqref{eq-expo} with $R = R_{\infty}$ satisfying 
\EQ{ \label{sing}
\begin{split}
u_{\infty}(x) 
= \left(-2 \log |x| -2 \log (-\log |x|) -2 \log 2\right)^{\frac{1}{2}} 
 + O((-\log |x|)^{- \frac{3}{2}} 
 \log & (-\log |x|)) \qquad \\ 
&\mbox{as $x \to 0$}
\end{split}
}
\end{thm}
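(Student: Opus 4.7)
The plan is to construct the singular solution as a radial function, following the scheme of Merle and Peletier~\cite{Merle-Peletier}. Writing $u(x)=U(|x|)$ and passing to the logarithmic variable $s=-\log r$, $\widetilde U(s):=U(e^{-s})$, equation \eqref{eq-expo} becomes
\EQN{
\widetilde U_{ss}+e^{-2s}\widetilde U\,(e^{\widetilde U^2}-1)=0,
}
and the asymptotic \eqref{sing} corresponds to $\widetilde U(s)\to\infty$ as $s\to\infty$ with leading profile $\widetilde U_0(s):=\sqrt{A(s)}$, $A(s):=2s-2\log s-2\log 2$. The crucial algebraic identity is that $\widetilde U_0^2=A$ forces $e^{\widetilde U_0^2}=e^{2s}/(4s^2)$, so that
\EQN{
e^{-2s}\widetilde U_0\,e^{\widetilde U_0^2}=\frac{\widetilde U_0}{4s^2}\sim(2s)^{-3/2},
}
which exactly matches $-(\widetilde U_0)_{ss}$ at leading order; a short computation shows $\widetilde U_0$ solves the ODE modulo a residual $R_0(s)=O(s^{-5/2}\log s)$ coming from the subleading terms in $A$.

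Next I would write $\widetilde U=\widetilde U_0+V$ and seek a correction $V$ of size $O(s^{-3/2}\log s)$. Because $\widetilde U_0 V=O(s^{-1}\log s)\to0$, the factorization $e^{\widetilde U^2}=e^{\widetilde U_0^2}e^{2\widetilde U_0V+V^2}$ allows a clean linearization, yielding
\EQN{
LV:=V_{ss}+\frac{A(s)}{2s^2}\,V=\mathcal{N}(V)+R_0(s),
}
where $\mathcal{N}(V)$ collects the quadratic-and-higher corrections from the exponential together with exponentially small pieces. The leading part of $L$ at infinity is $V_{ss}+V/s$, whose fundamental solutions are $\sqrt s J_1(2\sqrt s)$ and $\sqrt s Y_1(2\sqrt s)$, oscillatory of amplitude $O(s^{1/4})$. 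Variation of parameters, combined with integration by parts that exploits this oscillation, produces a right inverse $L^{-1}$ that sends data of size $O(s^{-5/2}\log s)$ into functions of size $O(s^{-3/2}\log s)$. A contraction argument for the map $V\mapsto L^{-1}(R_0+\mathcal{N}(V))$ on the weighted Banach space
\EQN{
\mathcal{Z}:=\bigl\{V\in C^2([s_0,\infty)):\|V\|_{\mathcal{Z}}:=\sup_{s\ge s_0}(\log s)^{-1}s^{3/2}|V(s)|<\infty\bigr\},
}
with $s_0$ taken sufficiently large, then yields a unique fixed point $V\in\mathcal{Z}$ and hence a smooth singular solution $u_\infty$ on a punctured disk $0<|x|<e^{-s_0}$ satisfying \eqref{sing}.

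It remains to extend $u_\infty$ radially outward to its first zero. So long as $U$ stays bounded, standard ODE theory provides a unique smooth continuation of $(rU')'=-rU(e^{U^2}-1)$; since $(rU')'<0$ wherever $U>0$ and the profile asymptotics give $rU'(r)\to 0^-$ as $r\to 0^+$, the map $r\mapsto rU'(r)$ is strictly decreasing and negative throughout, so $U'<0$ and $U$ must cross zero at some finite radius $R_\infty>0$. Taking this value as the boundary of the disk and invoking elliptic regularity yields $u_\infty\in C^\infty(B_{R_\infty}\setminus\{0\})$. The main obstacle is the contraction step in the middle paragraph: the nonlinearity $e^{U^2}$ is extraordinarily sensitive to perturbations of $U$, so the tolerated error $|V|\lesssim s^{-3/2}\log s$ is essentially optimal, and careful bookkeeping of the logarithmic factors in the residual $R_0$, in the oscillatory Green's kernel of $L$, and in the nonlinear exponential remainder is needed to verify that the fixed-point map is genuinely a contraction on $\mathcal{Z}$.
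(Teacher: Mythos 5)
Your proposal follows essentially the same scheme as the paper's proof: an Emden--Fowler change of variables, the ansatz $(\text{leading profile})+\text{correction}$ with the same leading profile $(2s-2\log s-2\log 2)^{1/2}$, linearization with leading operator $V_{ss}+V/s$, inversion via variation of parameters combined with integration by parts to exploit the oscillation of the fundamental solutions, a contraction argument in the weighted space with norm $\sup\,s^{3/2}(\log s)^{-1}|V|$, and finally an outward shooting/monotonicity argument to reach a first zero at $R_\infty$. The only notable cosmetic difference is that you keep the Bessel functions $\sqrt s\,J_1(2\sqrt s)$, $\sqrt s\,Y_1(2\sqrt s)$ as the fundamental system, whereas the paper deliberately adds $3/(16\rho^2)$ to the linear potential so that the fundamental solutions become elementary trigonometric functions $\rho^{1/4}\sin((2\rho)^{1/2})$, $\rho^{1/4}\cos((2\rho)^{1/2})$, which slightly simplifies the oscillatory-integral lemma; the paper itself flags this as an optional simplification.
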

To prove Theorem \ref{thm-singular-disk}, 
we pay our attention for $0 < r = |x| \ll 1$ and 
employ the following 
Emden-Fowler transformation:
\EQ{
y(\ro) = u(x), \qquad \ro = 2 |\log r|. 
} 
Then, we see that the equation in
\eqref{eq-expo} is equivalent to the following: 
\EQ{ \label{ODE}
- \frac{d^{2} y}{d \ro^{2}}  = \frac{e^{-\ro}}{4} y(e^{y^{2}} - 1). 
}
We shall show the following: 
\begin{prop} \label{local-singular}
There exists $\Lambda_{\infty} >0$ and 
a solution $y_{\infty}(\ro)$ to \eqref{ODE} for $\ro \in [\Lambda_{\infty}, \infty)$ 
satisfying 
\EQ{ \label{Asymptotic-local}
y_{\infty}(\ro) = (\ro - 2 \log \ro)^{\frac{1}{2}} 
+ O(\ro^{-\frac{3}{2}} \log \ro) \qquad \mbox{as $\ro \to \infty$}. 
}
\end{prop}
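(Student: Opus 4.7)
The plan is to construct $y_\infty$ as a perturbation of the explicit approximate profile $y_0(\rho) := (\rho - 2\log\rho)^{1/2}$. This ansatz is chosen because $e^{y_0(\rho)^2} = e^\rho/\rho^2$ produces an exact cancellation at leading order: both $-y_0''(\rho)$ and $\frac{e^{-\rho}}{4} y_0(\rho) e^{y_0(\rho)^2}$ equal $\frac{1}{4\rho^{3/2}} + o(\rho^{-3/2})$ as $\rho \to \infty$. A direct Taylor expansion of $y_0''$ and of the right-hand side then yields the residual
\[
E(\rho) := -y_0''(\rho) - \tfrac{e^{-\rho}}{4} y_0(\rho)\bigl(e^{y_0(\rho)^2} - 1\bigr) = \frac{\log\rho - 2}{\rho^{5/2}} + O\bigl(\rho^{-7/2}(\log\rho)^2\bigr).
\]

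Writing $y = y_0 + \psi$ and expanding $e^{(y_0+\psi)^2} = e^{y_0^2} e^{2 y_0 \psi + \psi^2}$, one arrives at a perturbation equation of the form
\[
L\psi := -\psi'' - V(\rho)\psi = E(\rho) + N(\rho, \psi),
\]
where $V(\rho) = \frac{e^{-\rho}}{4}\bigl(2 y_0^2 e^{y_0^2} + e^{y_0^2} - 1\bigr) = \frac{1}{2\rho} + O(\rho^{-2}\log\rho)$ is the linearization potential, and $N(\rho, \psi)$ collects the higher-order remainders (quadratic in $\psi$ at leading order, with a bounded double-exponential factor $e^{2 y_0 \psi + \psi^2}$ provided $|y_0 \psi|$ stays small). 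Imposing $\psi, \psi' \to 0$ at infinity converts the problem into the fixed-point equation
\[
\psi(\rho) = \Phi(\psi)(\rho) := \int_\rho^\infty G(\rho, s)\bigl[E(s) + N(s, \psi(s))\bigr]\,ds,
\]
where $G$ is the Green's function for $L$ vanishing at $+\infty$. I would then apply the Banach contraction principle to $\Phi$ on the weighted space
\[
B_\Lambda := \Bigl\{\psi \in C([\Lambda, \infty)) : \sup_{\rho \ge \Lambda} \rho^{3/2}(\log\rho)^{-1}|\psi(\rho)| \le 1\Bigr\},
\]
for $\Lambda = \Lambda_\infty$ sufficiently large, producing a unique fixed point $\psi$ with $|\psi(\rho)| \lesssim \rho^{-3/2}\log\rho$, which gives the asymptotic expansion \eqref{Asymptotic-local}.

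The key obstacle is matching the sharp decay rate claimed in \eqref{Asymptotic-local}. A naive twofold integration of $E$ against the kernel $(s-\rho)$ gives only $\psi = O(\rho^{-1/2}\log\rho)$; worse still, a Picard iteration already breaks down at the next step, because $V\psi_1 \sim \rho^{-3/2}\log\rho$ makes $\int_\rho^\infty (s-\rho) V(s)\psi_1(s)\,ds$ divergent. One must therefore work with the genuine Green's function $G$ of $L$, not with the free Laplacian. A WKB analysis of $L\phi = 0$ shows that the slowly decaying attractive potential $V \sim 1/(2\rho)$ admits two oscillatory fundamental solutions behaving like $\rho^{-1/4} \cos(\sqrt{2\rho} + \theta_\pm)$, and inverting $E$ against the resulting Green's function produces exactly the extra decay needed to land in $O(\rho^{-3/2}\log\rho)$. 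An equivalent, more hands-on route---in the Merle--Peletier style---is to augment $y_0$ with explicit subleading correction terms that drive the residual down to $O(\rho^{-9/2})$, after which the naive fixed-point scheme closes. Either way, the delicate interplay between the sharp exponential nonlinearity and the only mildly small correction $\psi$---ensuring that the multiplier $e^{2y_0\psi + \psi^2}$ remains bounded uniformly on $[\Lambda_\infty, \infty)$---is the technical heart of the argument.
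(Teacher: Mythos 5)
Your proposal takes essentially the same route as the paper: decompose $y = y_0 + \psi$ with $y_0 = (\rho-2\log\rho)^{1/2}$, observe that the linearized potential is $\tfrac{1}{2\rho}$ to leading order and that the residual is $O(\rho^{-5/2}\log\rho)$, and then invert the oscillatory linearized operator on the weighted ball $\{|\psi|\lesssim \rho^{-3/2}\log\rho\}$ via contraction. Your computations of the residual $E(\rho) = (\log\rho-2)\rho^{-5/2} + O(\rho^{-7/2}\log^2\rho)$ and the linearization potential $V(\rho) = \tfrac{1}{2\rho} + O(\rho^{-2}\log\rho)$ agree with the paper, and you correctly identify both the key mechanism (the oscillatory kernel gains a factor $\rho^{-1/2}$ over twofold integration, the content of the paper's Lemma~\ref{lem-sin}) and the delicate point about keeping $y_0\psi$ bounded so the exponential factor stays under control.

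Two small differences from the paper. First, the paper does not work with the genuine Green's function of the linearized operator: it instead replaces the linear potential by the nearby explicit potential $\tfrac{1}{2\rho} + \tfrac{3}{16\rho^2}$, whose solutions are exactly $\Phi = \rho^{1/4}\sin((2\rho)^{1/2})$ and $\Psi = \rho^{1/4}\cos((2\rho)^{1/2})$, and absorbs the discrepancy $-\tfrac{3}{16\rho^2}\psi$ into the nonlinear forcing ($g_7$ in their notation). This spares them from constructing fundamental solutions of the true operator via WKB/Bessel asymptotics. Your route through the actual linearized operator is also viable, just slightly heavier. Second, you have a sign error in the WKB amplitude: the fundamental solutions of $\psi'' + \tfrac{1}{2\rho}\psi = 0$ behave like $\rho^{+1/4}\cos(\sqrt{2\rho}+\theta_\pm)$, not $\rho^{-1/4}$ (this follows from $V^{-1/4}\exp(\pm i\int\sqrt{V})$ with $V\sim 1/(2\rho)$, or from the Bessel reduction $s=\sqrt{2\rho}$; the Wronskian of $\rho^{-1/4}e^{\pm i\sqrt{2\rho}}$ is not constant, so those cannot solve a potential equation). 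The correct kernel $(\rho s)^{1/4}\sin(\sqrt{2\rho}-\sqrt{2s})$ applied to $s^{-5/2}\log s$ does land in $O(\rho^{-3/2}\log\rho)$, so once the exponent is fixed your argument closes exactly as in the paper.
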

Putting
\EQ{ \label{def-phi}
y(\ro) = \phi(\ro) + \eta(\ro), \qquad 
\phi(\ro) := (\ro - 2\log \ro)^{\frac{1}{2}}, 
}
we have
\EQ{ \label{eq1:singular}
\frac{d^{2} y}{d \ro^{2}} = 
- \frac{\ro^{-\frac{3}{2}}}{4} - \frac{1}{4}(\phi^{-3} 
- \ro^{-\frac{3}{2}})
+ \phi^{- 3}\left\{\frac{1}{\ro} - \frac{1}{\ro^{2}}\right\}
+ \phi^{-1}\ro^{-2} + \frac{d^{2} \eta}{d \ro^{2}}. 
}
%\EQ{
%e^{-2t} y(e^{y^{2}} - 1) 
%= y e^{y^{2}-2t} - y e^{-2t}. 
%}
Since 
\EQ{
\pt y = \ro^{\frac{1}{2}}\left(1 - 2\frac{\log \ro}{\ro}\right)^{\frac{1}{2}} 
+ \eta
= \ro^{\frac{1}{2}} + \ro^{\frac{1}{2}}
\left\{(1 - 2 \frac{\log \ro}{\ro})^{\frac{1}{2}} - 1\right\} + \eta, 
\pr e^{y^{2} -\ro} 
= \ro^{-2} e^{y^{2} - \phi^{2}}
=\ro^{-2} \left\{ 
1 + (y^{2} - \phi^{2}) + 
\left(e^{y^{2} - \phi^{2}} - 1 - (y^{2} - \phi^{2})\right)\right\}, 
}
we obtain 
\EQ{
\begin{split}
y e^{y^{2}- \ro}
%& = 
%\left\{ 
%\ro^{\frac{1}{2}} + \ro^{\frac{1}{2}}
%\left\{(1 - 2 \frac{\log \ro}{\ro})^{\frac{1}{2}} - 1\right\} 
%+ \eta
%\right\} \times \ro^{-2} e^{y^{2} - \phi^{2}} \\
& =  \ro^{-2}\left\{ 
\ro^{\frac{1}{2}} + \ro^{\frac{1}{2}}
\left\{(1 - 2 \frac{\log \ro}{\ro})^{\frac{1}{2}} - 1\right\} 
+ \eta
\right\}  \times \\
& \times \left\{ 
1 + (y^{2} - \phi^{2}) + 
\left(e^{y^{2} - \phi^{2}} - 1 - (y^{2} - \phi^{2})\right)\right\} \\
%& = \frac{t^{- 2}}{4} \ro^{\frac{1}{2}} 
%+ \frac{t^{-2}}{4} (y^{2} - \phi^{2}) \\
%& \quad + \frac{t^{- 2}}{4}\left\{ 
%(2t)^{\frac{1}{2}}
%\left\{(1 - \frac{\log t}{t} - \frac{\log 2}{t})^{\frac{1}{2}} - 1\right\} 
%+ \eta
%\right\} \\
%& \quad + \frac{t^{-2}}{4} (y^{2} - \phi^{2})
%\left\{ 
%(2t)^{\frac{1}{2}}
%\left\{(1 - \frac{\log t}{t} - \frac{\log 2}{t})^{\frac{1}{2}} - 1\right\} 
%+ \eta
%\right\} \\
%& \quad 
% + \frac{t^{-2}}{4} 
%\left(e^{y^{2} - \phi^{2}} - 1 - (y^{2} - \phi^{2})\right)
%\left\{ 
%(2t)^{\frac{1}{2}}
%(1 - \frac{\log t}{t} - \frac{\log 2}{t})^{\frac{1}{2}} 
%+ \eta
%\right\} 
& = 
\ro^{-2}
\left\{ 
\ro^{\frac{1}{2}} + \ro^{\frac{1}{2}}
\left\{(1 - 2 \frac{\log \ro}{\ro})^{\frac{1}{2}} - 1\right\} 
+ \eta
\right\} \\
& + 
\ro^{-2}
\left\{ 
(y^{2} - \phi^{2}) \ro^{\frac{1}{2}} + (y^{2} - \phi^{2}) \ro^{\frac{1}{2}}
\left\{(1 - 2 \frac{\log \ro}{\ro})^{\frac{1}{2}} - 1\right\} 
+ (y^{2} - \phi^{2}) \eta
\right\} 
\\
& + 
\ro^{-2}
\left(e^{y^{2} - \phi^{2}} - 1 - (y^{2} - \phi^{2})\right)
\left\{ 
\ro^{\frac{1}{2}}(1 - 2 \frac{\log \ro}{\ro})^{\frac{1}{2}} 
+ \eta
\right\} \\
& = 
\ro^{-\frac{3}{2}} + \ro^{-\frac{3}{2}}
\left\{(1 - 2 \frac{\log \ro}{\ro})^{\frac{1}{2}} - 1\right\} 
+ \ro^{-2}\eta \\
& + 
\ro^{- \frac{3}{2}} (y^{2} - \phi^{2}) + (y^{2} - \phi^{2}) \ro^{-\frac{3}{2}}
\left\{(1 - 2 \frac{\log \ro}{\ro})^{\frac{1}{2}} - 1\right\} 
+ \ro^{-2}(y^{2} - \phi^{2}) \eta
\\
& + \ro^{-2}
\left(e^{y^{2} - \phi^{2}} - 1 - (y^{2} - \phi^{2})\right)
\left\{ 
\ro^{\frac{1}{2}}(1 - 2 \frac{\log \ro}{\ro})^{\frac{1}{2}} 
+ \eta
\right\}. 
\end{split}
}
Since
$
y^{2} - \phi^{2} 
= 2\phi \eta + \eta^{2} 
= 2 \ro^{\frac{1}{2}} \eta + \eta^{2} 
+ 2(\phi - \ro^{\frac{1}{2}}) \eta,  
$
we have 
\EQ{
(y^{2} - \phi^{2})\ro^{-\frac{3}{2}}  
= \frac{2 \eta}{\ro} + \ro^{-\frac{3}{2}}\eta^{2} 
+ 2 \ro^{-\frac{3}{2}}(\phi - \ro^{\frac{1}{2}})\eta. 
}
This yields that 
\EQ{ \label{eq2:singular}
\begin{split}
y e^{y^{2}-\ro} 
& = 
\ro^{-\frac{3}{2}} + \ro^{-\frac{3}{2}}
\left\{(1 - 2 \frac{\log \ro}{\ro})^{\frac{1}{2}} - 1\right\} 
+ \ro^{-2}\eta \\
& + 
 2 \frac{\eta}{\ro} + \ro^{-\frac{3}{2}}\eta^{2} 
+ 2 \ro^{-\frac{3}{2}}(\phi - \ro^{\frac{1}{2}})\eta 
\\
& + (y^{2} - \phi^{2}) \ro^{-\frac{3}{2}}
\left\{(1 - 2 \frac{\log \ro}{\ro})^{\frac{1}{2}} - 1\right\} 
+ \ro^{-2}(y^{2} - \phi^{2}) \eta
\\
& + 
\ro^{-2}
\left(e^{y^{2} - \phi^{2}} - 1 - (y^{2} - \phi^{2})\right)
\left\{ 
\ro^{\frac{1}{2}}(1 - 2 \frac{\log \ro}{\ro})^{\frac{1}{2}} 
+ \eta
\right\}. 
\end{split}
}
From \eqref{ODE}, \eqref{eq1:singular} and \eqref{eq2:singular}, we have 
\EQ{
\begin{split}
& \frac{\ro^{-\frac{3}{2}}}{4} 
+ \frac{1}{4}(\phi^{-3} - \ro^{-\frac{3}{2}})
- \phi^{- 3} \left\{\frac{1}{\ro} - \frac{1}{\ro^{2}}\right\}
- \phi^{-1} \ro^{-2} - \frac{d^{2} \eta}{d \ro^{2}}\\
& = 
\frac{\ro^{-\frac{3}{2}}}{4} + \frac{\ro^{-\frac{3}{2}}}{4}
\left\{(1 - 2 \frac{\log \ro}{\ro})^{\frac{1}{2}} - 1\right\} 
+ \frac{\ro^{-2}}{4} \eta \\
& + 
\frac{1}{2 \ro}\eta + \frac{\ro^{-\frac{3}{2}}\eta^{2}}{4} 
+ \frac{1}{2} \ro^{-\frac{3}{2}}(\phi - \ro^{\frac{1}{2}})\eta 
\\
& + \frac{1}{4} (y^{2} - \phi^{2}) \ro^{-\frac{3}{2}}
\left\{(1 - 2 \frac{\log \ro}{\ro})^{\frac{1}{2}} - 1\right\} 
+ \frac{1}{4} \ro^{-2}(y^{2} - \phi^{2}) \eta
\\
& + 
\frac{\ro^{-2}}{4} 
\left(e^{y^{2} - \phi^{2}} - 1 - (y^{2} - \phi^{2})\right)
\left\{ 
\ro^{\frac{1}{2}}(1 - 2 \frac{\log \ro}{\ro})^{\frac{1}{2}} 
+ \eta
\right\} - \frac{e^{-\ro} y}{4}. 
\end{split}
}
Namely, $\eta$ satisfies the following:
\EQ{ \label{eq-eta}
\begin{split}
0  = 
\frac{d^{2} \eta}{d \ro^{2}}
+ \frac{1}{2\ro} \eta + f(\ro) + 
\sum_{i=1}^{6}g_{i}(\ro, \eta) - \frac{e^{-\ro} y}{4},  
\end{split}
}
where 
\begin{align}
& f(\ro) 
= 
- \frac{1}{4}(\phi^{-3} - \ro^{-\frac{3}{2}})
+ \phi^{- 3} \left\{\frac{1}{\ro} - \frac{1}{\ro^{2}}\right\}
+ \phi^{-1} \ro^{-2} 
%\\ & \qquad \quad  
+ \frac{\ro^{-\frac{3}{2}}}{4}
\left\{(1 - 2 \frac{\log \ro}{\ro})^{\frac{1}{2}} - 1\right\}, \\ 
& g_{1}(\ro, \eta) = \frac{\ro^{-2}}{4} \eta, \quad
g_{2}(\ro, \eta) = \frac{\ro^{- \frac{3}{2}}}{2}(\phi - \ro^{\frac{1}{2}})\eta, \quad 
g_{3}(\ro, \eta) = \frac{\ro^{- \frac{3}{2}}}{4} \eta^{2}, \\
& g_{4}(\ro, \eta) = \frac{1}{4}(y^{2} - \phi^{2}) \ro^{-\frac{3}{2}}
\left\{(1 - 2 \frac{\log \ro}{\ro})^{\frac{1}{2}} - 1\right\}, 
\quad  g_{5}(\ro, \eta) = \frac{\ro^{-2}}{4} (y^{2} - \phi^{2}) \eta, \\  
& g_{6}(\ro, \eta) = \frac{\ro^{-2}}{4}
\left(e^{y^{2} - \phi^{2}} - 1 - (y^{2} - \phi^{2})\right)
\left\{ 
\ro^{\frac{1}{2}}(1 - 2\frac{\log \ro}{\ro})^{\frac{1}{2}} 
+ \eta
\right\}.
\end{align}
To solve the equation \eqref{eq-eta}, 
we first consider the following linear equation:~\footnote{
\eqref{linear} is obtained by removing the 
better decaying terms in right hand of \eqref{eq-eta} 
and by adding $3/(16 \rho^{2})$ in the 
linear potential. 
By adding the term, we can write the solution to \eqref{linear} just
by the trigonometric functions. 
Otherwise, we need to use the 
Bessel functions of order $1$. 
Thus, adding the extra term makes the proof a bit simpler.} 
\EQ{ \label{linear}
\frac{d^{2} \eta}{d \ro^{2}} + \left(\frac{1}{2 \ro} + \frac{3}{16 \ro^{2}} \right) \eta 
= 0. 
}
Any solution $\eta$ to \eqref{linear} can be written explicitly 
as follows:  
we have
\EQ{
\eta (\ro) = A \Phi(\rho) 
+ B \Psi(\rho) 
}
for some $A, B \in \mathbb{R}$, where 
\EQ{
\Phi(\ro) = \ro^{\frac{1}{4}} \sin ((2 \ro)^{\frac{1}{2}}), \qquad 
\Psi(\ro) = \ro^{\frac{1}{4}} \cos ((2 \ro)^{\frac{1}{2}}).   
}
Namely, $\Phi$ and $\Psi$ are the fundamental 
system of solutions to 
\eqref{linear}. 
For a given function $F$, 
we seek a solution to the following 
problem: 
\EQ{ \label{eq-inf}
\begin{cases}
\frac{d^{2} \eta}{d \ro^{2}} + (\frac{1}{2 \ro} + \frac{3}{16 \ro^{2}})
\eta + F = 0, & \qquad \ro \gg 1, \\
\lim_{\ro \to \infty} \eta(\ro) = 0. 
\end{cases}
}
By the variations of parameters, we see that 
\eqref{eq-inf} is equivalent to the following 
integral equation: 
\EQ{
\eta(\ro) = \int_{\ro}^{\infty} 
(\Phi(s) \Psi(\ro) - \Phi(\ro) \Psi(s))
F(s) ds  = \int_{\ro}^{\infty} 
(\ro s)^{\frac{1}{4}} \sin((2\ro)^{\frac{1}{2}} - (2s)^{\frac{1}{2}})
F(s) ds. 
}
%We note that 
%\EQ{ \label{decom-linear}
%\Phi(s) \Psi(\ro) - \Phi(\ro) \Psi(s) = 
%(\ro s)^{\frac{1}{4}} \sin((2\ro)^{\frac{1}{2}} - (2s)^{\frac{1}{2}}).
%}
By integrating by parts, we can obtain the following:
\begin{lem} \label{lem-sin}
Let $\sigma > 1$. Then, we have
\begin{align}
& \int_{\ro}^{\infty}\sin ((2 \ro)^{\frac{1}{2}} - (2 s)^{\frac{1}{2}})
s^{-\sigma}\log s ds  
= - \sqrt{2} \ro^{-\sigma + \frac{1}{2}}\log \ro 
+ O(\ro^{-\sigma-\frac{1}{2}}\log \ro),  \\ 
& \int_{\ro}^{\infty}\sin ((2 \ro)^{\frac{1}{2}} - (2s)^{\frac{1}{2}})s^{-\sigma} ds  
= - \sqrt{2} \ro^{-\sigma + \frac{1}{2}} + O(\ro^{-\sigma-\frac{1}{2}}). 
\end{align}
\end{lem}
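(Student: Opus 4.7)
The plan is iterated integration by parts, based on the two identities
\[
\sin\bigl((2\ro)^{1/2}-(2s)^{1/2}\bigr) = \sqrt{2s}\,\tfrac{d}{ds}\cos\bigl((2\ro)^{1/2}-(2s)^{1/2}\bigr), \quad \cos\bigl((2\ro)^{1/2}-(2s)^{1/2}\bigr) = -\sqrt{2s}\,\tfrac{d}{ds}\sin\bigl((2\ro)^{1/2}-(2s)^{1/2}\bigr),
\]
both a consequence of $\tfrac{d}{ds}(2s)^{1/2}=(2s)^{-1/2}$. Each application of one of them sets up an integration by parts that produces a boundary contribution at $s=\ro$ (where $\cos 0=1$ and $\sin 0=0$) plus a new integral whose amplitude has been differentiated once.

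For the first integral, the first IBP rewrites it as $\sqrt{2}\int_\ro^\I s^{-\sigma+1/2}\log s\,\tfrac{d}{ds}\cos\bigl((2\ro)^{1/2}-(2s)^{1/2}\bigr)\,ds$; the boundary evaluation yields the leading term $-\sqrt{2}\ro^{-\sigma+1/2}\log\ro$ (the endpoint at infinity vanishes because $\sigma>1$) and leaves a $\cos$-integral with amplitude of size $O(s^{-\sigma-1/2}\log s)$. A crude bound on this remainder is only $O(\ro^{-\sigma+1/2}\log\ro)$, matching the main term, so one must iterate. The second IBP, via $\cos = -\sqrt{2s}\,(\sin)'$, has no boundary term (since $\sin 0 = 0$) and leaves a $\sin$-integral with amplitude $O(s^{-\sigma-1}\log s)$. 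The third IBP, back to $\sin = \sqrt{2s}\,(\cos)'$, produces a boundary contribution of size $O(\ro^{-\sigma-1/2}\log\ro)$ together with a $\cos$-integral whose amplitude is $O(s^{-\sigma-3/2}\log s)$ and which integrates to $O(\ro^{-\sigma-1/2}\log\ro)$. Summing, only the first-IBP boundary term survives above the claimed error threshold, giving the asserted asymptotic. The second identity (without $\log s$) is proved identically; the logarithm perturbs the bookkeeping only by strictly lower-order terms since $(\log s)'=1/s$.

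The main obstacle is precisely the observation that a single integration by parts does \emph{not} suffice: it extracts the leading constant $-\sqrt{2}\ro^{-\sigma+1/2}\log\ro$ correctly, but the naive bound on the leftover integral is of the same order as the main term. One must push through two more rounds of IBP, where each round contributes an extra factor of $s^{-1/2}$ in the amplitude decay (half a unit from the $\sqrt{2s}$ trade and another half from differentiating the amplitude), before the remainder is genuinely smaller than $\ro^{-\sigma-1/2}\log\ro$. The bookkeeping of which boundary terms vanish (those involving $\sin 0$) and which enter the error (those involving $\cos 0$), together with checking that the $\log s$ factor does not worsen any decay rate, is where the computation lives.
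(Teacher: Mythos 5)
Your proof is correct and is exactly the integration-by-parts argument the paper invokes (the paper itself only says ``by integrating by parts'' and gives no further details). You correctly identify the key point the authors leave implicit: a single pass extracts the leading boundary term $-\sqrt{2}\,\rho^{-\sigma+1/2}\log\rho$ at $s=\rho$ but leaves a remainder of the same crude order, so three passes are needed, with the $\sin 0=0$ boundary vanishing on the even pass and the remaining boundary and tail integral both landing at $O(\rho^{-\sigma-1/2}\log\rho)$.
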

%\begin{proof}
%We set $\widetilde{s} = (2s)^{\frac{1}{2}}$. 
%We have $\frac{d \widetilde{s}}{d s} = \widetilde{s}^{-1}$. 
%Then, by integrating by parts, we obtain 
%\EQ{
%\begin{split}
%& \quad \int_{\ro}^{\infty}\sin ((2 \ro)^{\frac{1}{2}} 
%- (2s)^{\frac{1}{2}}))s^{-\sigma} \log sds \\
%& = 2^{\sigma} \int_{(2 \ro)^{\frac{1}{2}}}^{\infty} 
%\sin ((2\ro)^{\frac{1}{2}} - \widetilde{s}) \widetilde{s}^{-2\sigma + 1} 
%(2 \log \widetilde{s} - \log 2) 
%d \widetilde{s} \\
%& = 2^{\sigma} \int_{(2 \ro)^{\frac{1}{2}}}^{\infty} 
%(\cos ((2 \ro)^{\frac{1}{2}} - \widetilde{s}))^{\prime} 
%\widetilde{s}^{-2\sigma + 1} 
%(2 \log \widetilde{s} - \log 2)d \widetilde{s} \\
%& = - \sqrt{2} \ro^{-\sigma + \frac{1}{2}} \log \ro 
%\\
%& \quad - 2^{\sigma} \int_{(2 \ro)^{\frac{1}{2}}}^{\infty} 
%\cos ((2 \ro)^{\frac{1}{2}} - \widetilde{s})
%( (-2 \sigma +1)\widetilde{s}^{-2\sigma} 
%(2 \log \widetilde{s} - 2\log 2)
%+ 2 \widetilde{s}^{-2 \sigma})
%d \widetilde{s} . 
%\end{split}
%} 
%By the similar argument, we obtain 
%\EQ{
%\begin{split}
%& \int_{(2 \ro^{\frac{1}{2}})}^{\infty} 
%(\cos ((2\ro)^{\frac{1}{2}} - \widetilde{s}) 
%\widetilde{s}^{-2\sigma} \log \widetilde{s} d \widetilde{s}
%= O(\ro^{-\sigma-\frac{1}{2}} \log \ro), \quad \\
%& \int_{t^{\frac{1}{2}}}^{\infty} 
%(\cos ((2 \ro)^{\frac{1}{2}} - \widetilde{s}) 
%\widetilde{s}^{-2\sigma} d \widetilde{s}
%= O(\ro^{-\sigma-\frac{1}{2}}). 
%\end{split}
%}
%we can also calculate 
%$\int_{t}^{\infty}\sin (2 (t^{\frac{1}{2}} - s^{\frac{1}{2}}))s^{-\sigma} ds$ similarly. 
%This completes the proof. 
%\end{proof}

Let $g_{7}(\rho, \eta) = - \frac{3}{16 \ro^{2}} \eta$. 
In order to prove Theorem \ref{thm-singular-disk}, 
we seek a solution to the following problem:
\EQ{ \label{eq-eta-infty}
\begin{cases}
\frac{d^{2} \eta}{d \ro^{2}}
+ (\frac{1}{2\ro} + \frac{3}{16 \ro^{2}})\eta + f(\rho) + 
\sum_{i=1}^{7}g_{i}(\rho, \eta) - \frac{e^{-\rho} y}{4} = 0, & \qquad \rho \gg 1, \\
\lim_{\ro \to \infty} 
\eta(\ro) = 0. & 
\end{cases}
} 
To this end, we prepare several estimates, which are needed 
later. 
First, note that  
\begin{equation} \label{estimate1:phi}
\phi(\rho) = \rho^{\frac{1}{2}} + O(\rho^{-\frac{1}{2}} \log \rho). 
\end{equation}
By \eqref{estimate1:phi} and elementary calculations, 
we can obtain the following:
\begin{lem} \label{estimate-f}
Let $\phi$ be the function given by \eqref{def-phi}. 
Then, for sufficiently large $\ro>0$, we have
\begin{align}
& \phi^{-3} - \rho^{-3/2} 
= 3 \rho^{-5/2} \log \rho + O(\rho^{-7/2}\log^2\rho), \\
& \biggl| \ro^{-\frac{3}{2}}
\left\{(1- 2 \frac{\log \ro}{\ro})^{\frac{1}{2}} - 1\right\} \biggl| 
= -\rho^{-5/2}\log\rho + O(\rho^{-7/2}\log^2\rho), \\
& \phi^{-1}\rho^{-2} = \rho^{-5/2} + O(\rho^{-7/2} \log\rho), \\
& \phi^{-3} (\rho^{-1} - \rho^{-2}) = \rho^{-5/2} + O(\rho^{-7/2}\log\rho).
\end{align}
\end{lem}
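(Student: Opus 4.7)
The plan is to reduce all four estimates to a single Taylor expansion of $(1-x)^{\alpha}$ at $x=0$ with $x := 2(\log\rho)/\rho$, which is small for $\rho\gg1$. Indeed, factoring gives the basic identity
\EQ{
  \phi(\rho) = \rho^{\frac{1}{2}}\left(1 - \frac{2\log \rho}{\rho}\right)^{\frac{1}{2}},
}
so that for any real $\alpha$,
\EQ{
  \phi(\rho)^{\alpha} = \rho^{\alpha/2}\Big(1 + \tfrac{\alpha}{2}\cdot\bigl(-\tfrac{2\log\rho}{\rho}\bigr) + O\bigl((\tfrac{\log\rho}{\rho})^{2}\bigr)\Big)
  = \rho^{\alpha/2} - \alpha\,\rho^{\alpha/2-1}\log\rho + O\bigl(\rho^{\alpha/2-2}\log^{2}\rho\bigr),
}
with the implicit constants depending only on $\alpha$ and uniform once $\rho$ is large enough that $2(\log\rho)/\rho < 1/2$, say.

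From this I would first take $\alpha=-3$ to obtain the first estimate immediately as
\EQ{
  \phi^{-3} - \rho^{-3/2} = 3\rho^{-5/2}\log\rho + O(\rho^{-7/2}\log^{2}\rho).
}
The second estimate is even more direct: the scalar Taylor expansion $(1-x)^{1/2} - 1 = -x/2 + O(x^{2})$ applied to $x=2(\log\rho)/\rho$ and multiplied by $\rho^{-3/2}$ gives
\EQ{
  \rho^{-3/2}\bigl\{(1-2\tfrac{\log\rho}{\rho})^{1/2} - 1\bigr\} = -\rho^{-5/2}\log\rho + O(\rho^{-7/2}\log^{2}\rho).
}
(I note that the absolute-value bars on the left-hand side in the statement are inconsistent with the negative leading term on the right, and would presumably be removed.)

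For the third estimate I take $\alpha=-1$ in the boxed expansion, multiply by $\rho^{-2}$, and note that every term beyond the leading $\rho^{-5/2}$ is absorbed into $O(\rho^{-7/2}\log\rho)$. For the fourth estimate I substitute the $\alpha=-3$ expansion into $\phi^{-3}(\rho^{-1}-\rho^{-2})$: the leading product $\rho^{-3/2}\cdot\rho^{-1}=\rho^{-5/2}$ provides the main term, and all other products are at most $\rho^{-7/2}\log\rho$. There is no real obstacle here; the only point requiring care is bookkeeping of the $\log\rho$ factors when squaring the expansion in the error, which is why the error in the first two estimates is $O(\rho^{-7/2}\log^{2}\rho)$ while in the last two a single logarithm suffices. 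A single display of the general expansion above, followed by four one-line specializations, should constitute the complete proof.
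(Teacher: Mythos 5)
Your proposal is correct and takes essentially the approach the paper intends: the paper states Lemma~\ref{estimate-f} follows from \eqref{estimate1:phi} by ``elementary calculations'' without spelling them out, and your uniform Taylor expansion of $\phi^{\alpha}=\rho^{\alpha/2}(1-2(\log\rho)/\rho)^{\alpha/2}$ followed by four specializations is exactly the intended computation. Your parenthetical observation about the spurious absolute-value bars in the second estimate is also correct; the sign of the left-hand side is negative and the bars should not be there.
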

\begin{lem} \label{estimate-diffe-g}
Suppose that $|\eta_{1}(\ro)|, |\eta_{2}(\ro)| \lesssim 
\ro^{-\frac{3}{2}}\log \ro$.  
Then, we have
\begin{align}
& |g_{1}(\ro, \eta_{1}) - g_{1}(\ro, \eta_{2})|, 
|g_{7}(\ro, \eta_{1}) - g_{7}(\ro, \eta_{2})| 
\lesssim 
\ro^{-2} |\eta_{1} - \eta_{2}|, \\
& |g_{2}(\ro, \eta_{1}) - g_{2}(\ro, \eta_{2})|, 
|g_{4}(\ro, \eta_{1}) - g_{4}(\ro, \eta_{2})|, 
|g_{6}(\ro, \eta_{1}) - g_{6}(\ro, \eta_{2})| \notag \\ & 
\lesssim \ro^{-2} \log \ro |\eta_{1} - \eta_{2}|, \\
& 
|g_{3}(\ro, \eta_{1}) - g_{3}(\ro, \eta_{2})|, 
|g_{5}(\ro, \eta_{1}) - g_{5}(\ro, \eta_{2})|
\lesssim \ro^{-3} \log \ro |\eta_{1} - \eta_{2}|.  
\end{align} 
\end{lem}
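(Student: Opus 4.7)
All seven estimates are elementary algebraic manipulations once one substitutes $y^2 - \phi^2 = 2\phi\eta + \eta^2$, uses the size assumption $|\eta_i| \lesssim \rho^{-3/2}\log\rho$, and recalls the asymptotic $\phi(\rho) = \rho^{1/2} + O(\rho^{-1/2}\log\rho)$ from \eqref{estimate1:phi}. The plan is to split into three groups according to the nature of the dependence on $\eta$: purely linear ($g_1$, $g_2$, $g_7$), polynomial ($g_3$, $g_4$, $g_5$), and exponential ($g_6$).

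For $g_1$ and $g_7$, the difference is immediate from linearity. For $g_2$, the difference is $(\rho^{-3/2}/2)(\phi - \rho^{1/2})(\eta_1 - \eta_2)$, and the factor $|\phi - \rho^{1/2}| = O(\rho^{-1/2}\log\rho)$ supplies the extra $\rho^{-1/2}\log\rho$, producing the claimed $\rho^{-2}\log\rho$ coefficient.

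For the polynomial terms I would factor $\eta_1^2 - \eta_2^2 = (\eta_1 + \eta_2)(\eta_1 - \eta_2)$ and use $|\eta_1 + \eta_2| \lesssim \rho^{-3/2}\log\rho$. This gives $g_3$ at once. For $g_4$, after substituting $y_i^2 - \phi^2 = 2\phi\eta_i + \eta_i^2$, the polynomial difference is
\[
\bigl| (y_1^2 - \phi^2) - (y_2^2 - \phi^2) \bigr|
\lesssim \phi\,|\eta_1 - \eta_2| + |\eta_1 + \eta_2|\,|\eta_1 - \eta_2|
\lesssim \rho^{1/2}|\eta_1 - \eta_2|,
\]
and the remaining scalar factor $\rho^{-3/2}\bigl|(1 - 2\log\rho/\rho)^{1/2} - 1\bigr| \lesssim \rho^{-5/2}\log\rho$ combines to give the claimed $\rho^{-2}\log\rho$ coefficient. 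The estimate for $g_5$ uses the same expansion but carries one extra factor of $\eta$, which supplies an additional $\rho^{-3/2}\log\rho$ and hence yields the $\rho^{-3}\log\rho$ coefficient.

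The term $g_6$ is the one I expect to require the most care, and therefore the main (though still routine) obstacle. The key observation is that under the smallness assumption, $u_i := y_i^2 - \phi^2 = 2\phi\eta_i + \eta_i^2$ satisfies $|u_i| \lesssim \rho^{-1}\log\rho \ll 1$ for $\rho$ large, so one may apply the Taylor-type Lipschitz estimate
\[
\bigl|(e^{u_1} - 1 - u_1) - (e^{u_2} - 1 - u_2)\bigr| \lesssim (|u_1| + |u_2|)\,|u_1 - u_2|,
\]
together with $|u_1 - u_2| \lesssim \rho^{1/2}|\eta_1 - \eta_2|$. Writing $g_6 = (\rho^{-2}/4)\, B_i A_i$ with $B_i := e^{u_i} - 1 - u_i$ of size $O(\rho^{-2}\log^2\rho)$ and $A_i := \rho^{1/2}(1 - 2\log\rho/\rho)^{1/2} + \eta_i$ of size $O(\rho^{1/2})$, and using $|A_1 - A_2| = |\eta_1 - \eta_2|$, the product-difference identity $B_1 A_1 - B_2 A_2 = B_1(A_1 - A_2) + (B_1 - B_2)A_2$ finishes the estimate, the dominant contribution being $\rho^{-2}\cdot\rho^{-1/2}\log\rho\,|\eta_1 - \eta_2|\cdot\rho^{1/2} = \rho^{-2}\log\rho\,|\eta_1 - \eta_2|$.
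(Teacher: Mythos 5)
Your proposal is correct and follows essentially the same route as the paper: linear terms $g_1,g_2,g_7$ treated directly, $g_3$ via factoring $\eta_1^2-\eta_2^2$, $g_4,g_5$ via the expansion $y_i^2-\phi^2=2\phi\eta_i+\eta_i^2$ and the Lipschitz bound $|u_1-u_2|\lesssim\rho^{1/2}|\eta_1-\eta_2|$, and $g_6$ via the same product-difference split $B_1A_1-B_2A_2=B_1(A_1-A_2)+(B_1-B_2)A_2$ together with a mean-value/Taylor estimate for $B_1-B_2$, exactly as in the paper's decomposition into $I$ and $II$. One small presentational remark: your explanation of $g_5$ (``carries one extra factor of $\eta$, which supplies an additional $\rho^{-3/2}\log\rho$'') is a compressed heuristic — the rigorous step is the Leibniz split $(y_1^2-\phi^2)\eta_1-(y_2^2-\phi^2)\eta_2=(y_1^2-\phi^2)(\eta_1-\eta_2)+[(y_1^2-\phi^2)-(y_2^2-\phi^2)]\eta_2$, in which both pieces separately give $\rho^{-1}\log\rho\,|\eta_1-\eta_2|$; combined with the prefactor $\rho^{-2}$ (which differs from the $\rho^{-5/2}\log\rho$ prefactor of $g_4$) this yields the claimed $\rho^{-3}\log\rho$. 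This is what the paper does explicitly, and it is worth spelling out rather than deducing by analogy with $g_4$, since the prefactor change and the extra $\eta$ move in opposite directions.
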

\begin{proof}
From the definitions of $g_{i}$ and \eqref{estimate1:phi}, we have 
\begin{align}
& |g_{1}(\ro, \eta_{1}) - g_{1}(\ro, \eta_{2})|, 
|g_{7}(\ro, \eta_{1}) - g_{7}(\ro, \eta_{2})| 
\lesssim \ro^{-2} |\eta_{1} - \eta_{2}|, \\
& |g_{2}(\ro, \eta_{1}) - g_{2}(\ro, \eta_{2})| 
\lesssim \ro^{-\frac{3}{2}} |\phi - \ro^{\frac{1}{2}}| |\eta_{1} - \eta_{2}| 
 \lesssim \ro^{-2}\log \ro |\eta_{1} - \eta_{2}|, \\
& |g_{3}(\ro, \eta_{1}) - g_{3}(\ro, \eta_{2})| 
\lesssim \ro^{-\frac{3}{2}} |\eta_{1}^{2} - \eta_{2}^{2}| 
= \ro^{-\frac{3}{2}}|\eta_{1} + \eta_{2}||\eta_{1} - \eta_{2}| \\
& \hspace{6.45cm} \lesssim \ro^{-3} \log \ro |\eta_{1} - \eta_{2}|,  
\end{align}
Let $y_{1}(\ro) = \phi(\ro) + \eta_{1}(\ro)$ and 
$y_{2}(\ro) = \phi(\ro) + \eta_{2}(\ro)$. 
Then, we obtain 
\EQ{
\begin{split}
|g_{4}(\ro, \eta_{1}) - g_{4}(\ro, \eta_{2})| 
& \lesssim \ro^{-\frac{5}{2}} \log \ro 
|(y_{1}^{2}(\ro) - \phi^{2}(\ro)) - (y_{2}^{2}(\ro) - \phi^{2}(\ro))| \\ 
& \leq \ro^{-\frac{5}{2}} \log \ro (2\phi |\eta_{1} - \eta_{2}| 
+ |\eta_{1} + \eta_{2}||\eta_{1} - \eta_{2}|) \\
& \lesssim \ro^{-2} \log \ro|\eta_{1} - \eta_{2}|.  
\end{split}
}
Since $y_{i}^{2} - \phi^{2} = 2\phi \eta_{i} + \eta_{i}^{2}$ for 
$i = 1, 2$, 
we have 
\EQ{ \label{estimate-diffe-g-eq1} 
|y^{2} - \phi^{2}| \leq |2 \phi + \eta||\eta| 
\lesssim \rho^{-1} \log \rho\qquad \mbox{for $i= 1, 2$}. 
}
It follows that 
\EQ{
\begin{split}
|g_{5}(\ro, \eta_{1}) - g_{5}(\ro, \eta_{2})|
& \lesssim \ro^{-2} |(y_{1}^{2} - \phi^{2})\eta_{1} 
- (y_{2}^{2} - \phi^{2})\eta_{2}| \\
& \lesssim \ro^{-2}|(y_{1}^{2} - \phi^{2})(\eta_{1} - \eta_{2})| 
+ \ro^{-2} |y_{1}^{2} - y_{2}^{2}||\eta_{2}| \\
& \lesssim \ro^{-3}\log \ro |\eta_{1} - \eta_{2}| 
+ \ro^{-2}\ro^{-\frac{3}{2}} (\log \ro) \ro^{\frac{1}{2}} |\eta_{1} - \eta_{2}| \\
& \lesssim 
\ro^{-3} \log \ro|\eta_{1} - \eta_{2}|. 
\end{split}
}
\EQ{
\begin{split}
& \quad |g_{6}(\ro, \eta_{1}) - g_{6}(\ro, \eta_{2})| \\
& \lesssim 
\ro^{-2} |e^{y_{1}^{2} - \phi^{2}} - 1 - (y_{1}^{2} - \phi^{2})||\eta_{1} - \eta_{2}| \\
& \quad + \ro^{-2} 
|(e^{y_{1}^{2} - \phi^{2}} - 1 - (y_{1}^{2} - \phi^{2})) 
- (e^{y_{2}^{2} - \phi^{2}} - 1 - (y_{2}^{2} - \phi^{2}))|
|\ro^{\frac{1}{2}} + \eta_{2}| \\
& \lesssim 
\ro^{-2} |e^{y_{1}^{2} - \phi^{2}} - 1 - (y_{1}^{2} - \phi^{2})|
|\eta_{1} - \eta_{2}| \\
& \quad + \ro^{-\frac{3}{2}} 
|(e^{y_{1}^{2} - \phi^{2}} - 1 - (y_{1}^{2} - \phi^{2})) 
- (e^{y_{2}^{2} - \phi^{2}} - 1 - (y_{2}^{2} - \phi^{2}))| \\
& =: I + II. 
\end{split} 
}
From \eqref{estimate-diffe-g-eq1}, we have 
\EQ{
I \lesssim \ro^{-2} |y_{1}^{2} - \phi^{2}|^{2} |\eta_{1} - \eta_{2}| 
\lesssim \ro^{-4} (\log \ro)^{2} |\eta_{1} - \eta_{2}|. 
}
By the mean value theorem and \eqref{estimate-diffe-g-eq1}, we have 
\EQ{
\begin{split}
& \quad |(e^{y_{1}^{2} - \phi^{2}} - 1 - (y_{1}^{2} - \phi^{2})) 
- (e^{y_{2}^{2} - \phi^{2}} - 1 - (y_{2}^{2} - \phi^{2}))| \\
& \lesssim 
|\exp[\theta (y_{1}^{2} - \phi^{2}) + (1-\theta)(y_{2}^{2} - \phi^{2})]
- 1||y_{1}^{2} - y_{2}^{2}| \\
& \lesssim 
|\theta (y_{1}^{2} - \phi^{2}) + (1-\theta)(y_{2}^{2} - \phi^{2})|
|y_{1}^{2} - y_{2}^{2}| \\
& \lesssim \ro^{-1} \log \ro(2\phi |\eta_{1} - \eta_{2}| 
+ |\eta_{1} + \eta_{2}| |\eta_{1} - \eta_{2}|) \\
& \lesssim \ro^{-\frac{1}{2}} \log \ro|\eta_{1} - \eta_{2}|.  
\end{split}
}
This yields that 
$II \lesssim \ro^{-2} \log \ro|\eta_{1} - \eta_{2}|$. 
Thus, we see that 
\EQ{
|g_{6}(\ro, \eta_{1}) - g_{6}(\ro, \eta_{2})| \lesssim \ro^{-2} \log \ro
|\eta_{1} - \eta_{2}|.
} 
\end{proof}
We are now in a position to prove Proposition \ref{local-singular}. 
\begin{proof}[Proof of Proposition \ref{local-singular}]
We note that \eqref{eq-eta-infty} is equivalent to the following 
integral equation: 
\EQ{
\eta(\ro) = \mathcal{T}[\eta](\ro), 
}
in which 
\EQ{
\mathcal{T}[\eta](\ro) 
%\pt = 
%\int_{\ro}^{\infty} (\Phi(s) \Psi(\ro) - \Phi(\ro) \Psi(s))
%F(s, \eta) ds 
%\pr 
= \int_{\ro}^{\infty} (\ro s)^{\frac{1}{4}}\sin ((2\ro)^{\frac{1}{2}} - (2s)^{\frac{1}{2}})
F(s, \eta) ds, 
}
where 
\EQ{
F[\ro, \eta] = f(\ro) + \sum_{i=1}^{7} g_{i}(\ro, \eta) - \frac{e^{-\ro}}{4} y. 
}
Fix $\Lambda >0$ to be sufficiently large and let $X$ be a space of 
continuous functions on $[\Lambda, \infty)$ equipped with the following norm: 
\EQ{
\|\xi\| = \sup\left\{|\ro|^{\frac{3}{2}}(\log \ro)^{-1} |\xi(\ro)| 
\mid \ro \geq \Lambda \right\}. 
}
We fix a constant $C_{*}>0$, which is defined later and 
set 
\EQ{
\Sigma = \left\{\xi \in X \mid \|\xi\| \leq 2 C_{*} \right\}. 
}
First, we shall show that $\mathcal{T}$ maps from $\Sigma$ 
to itself.  
%By \eqref{decom-linear}, we have 
%\EQ{
%(\Phi(s) \Psi(\ro) - \Phi(\ro) \Psi(s))f(s) 
%= (\ro s)^{\frac{1}{4}}\sin ((2\ro)^{\frac{1}{2}} - (2s)^{\frac{1}{2}})f(s).
%}
It follows from Lemmas \ref{lem-sin} and \ref{estimate-f} 
that there exists a constant $C_{*}>0$ such that 
\EQ{ \label{eq-contra-1}
\biggl| \int_{\ro}^{\infty} (\ro s)^{\frac{1}{4}}\sin ((2\ro)^{\frac{1}{2}} - (2s)^{\frac{1}{2}})f(s) ds\biggl|
\leq C_{*} \ro^{-\frac{3}{2}} \log \ro. 
}
From Lemma \ref{estimate-diffe-g}, we have 
\EQ{
\biggl|\sum_{i=1}^{7}g_{i}(\ro, \eta)\biggl| \lesssim 
\rho^{-2} \log \ro |\eta| \lesssim \ro^{-\frac{7}{2}}(\log \ro)^{2},  
}
which yields that 
\EQ{
\begin{split}
& \biggl| \int_{\ro}^{\infty} 
(\ro s)^{\frac{1}{4}}\sin ((2\ro)^{\frac{1}{2}} - (2s)^{\frac{1}{2}})
\sum_{i=1}^{7}g_{i}(\ro, \eta) ds \biggl| 
\lesssim \int_{\ro}^{\infty} \ro^{\frac{1}{4}} s^{\frac{1}{4}} 
s^{-\frac{7}{2}} (\log s)^{2} ds \\
& \lesssim \ro^{\frac{1}{4}} \int_{\ro}^{\infty} s^{-3} ds 
\lesssim \ro^{-\frac{7}{4}}. 
\end{split}
}
Therefore, we can take $\Lambda>0$ sufficiently large 
so that 
\EQ{\label{eq-contra-2}
\biggl| \int_{\ro}^{\infty} (\ro s)^{\frac{1}{4}}\sin ((2\ro)^{\frac{1}{2}} - (2s)^{\frac{1}{2}})
\sum_{i=1}^{7}g_{i}(\ro, \eta) ds \biggl| \leq 
C_{*} \ro^{-\frac{3}{2}} \log \ro 
}
for $\ro \geq \Lambda$. 
We can easily find that 
\EQ{\label{eq-contra-2-0}
\biggl| \int_{\ro}^{\infty} 
(\ro s)^{\frac{1}{4}}\sin ((2\ro)^{\frac{1}{2}} - (2s)^{\frac{1}{2}})
e^{- s}y(s) ds \biggl| \lesssim e^{-\ro/2}. 
}
By \eqref{eq-contra-1}, \eqref{eq-contra-2} 
and \eqref{eq-contra-2-0}, 
we obtain 
\EQ{
|\mathcal{T}[\eta](\ro)| \leq 2C_{*} \ro^{-\frac{3}{2}} \log \ro
}
for $\eta \in \Sigma$. 
This yields that $\mathcal{T}[\eta] \in \Sigma$. 

Next, we shall show that 
$\mathcal{T}$ is a contraction mapping. 
For $\eta_{1}, \eta_{2} \in \Sigma$, we have 
\EQ{
\begin{split}
|\mathcal{T}[\eta_{1}](\ro) - \mathcal{T}[\eta_{2}](\ro)| 
& \leq \sum_{i=1}^{7} \int_{\ro}^{\infty} 
|(\ro s)^{\frac{1}{4}}\sin ((2\ro)^{\frac{1}{2}} - (2s)^{\frac{1}{2}})|
|g_{i}(s, \eta_{1}) - g_{i}(s, \eta_{2})| ds \\
& \quad + \int_{\ro}^{\infty} 
|(\ro s)^{\frac{1}{4}}\sin ((2\ro)^{\frac{1}{2}} - (2s)^{\frac{1}{2}})|
e^{-s}|\eta_{1} - \eta_{2}| ds. 
\end{split}
}
It follows from Lemma \ref{estimate-diffe-g} that 
\EQ{
\begin{split}
|\mathcal{T}[\eta_{1}](\ro) - \mathcal{T}[\eta_{2}](\ro)|
& \lesssim 
\int_{\ro}^{\infty} 
|(\ro s)^{\frac{1}{4}}\sin ((2\ro)^{\frac{1}{2}} - (2s)^{\frac{1}{2}})| 
s^{-2} \log s|\eta_{1} - \eta_{2}| ds\\
& + \|\eta_{1} - \eta_{2}\|e^{-\rho/2} \\
& \lesssim 
\int_{\ro}^{\infty} s^{\frac{1}{4}} \ro^{\frac{1}{4}}s^{-\frac{7}{2}}(\log s)^{2}
\|\eta_{1} - \eta_{2}\| ds + \|\eta_{1} - \eta_{2}\|e^{-\ro}\\
& \lesssim 
\|\eta_{1} - \eta_{2}\| \ro^{-2} (\log \ro)^{2}.  
\end{split}
}
This yields that 
\EQ{
\|\mathcal{T}[\eta_{1}](\ro) - \mathcal{T}[\eta_{2}](\ro)\|
\lesssim \ro^{-\frac{1}{4}} \|\eta_{1} - \eta_{2}\| < \frac{1}{2}
\|\eta_{1} - \eta_{2}\|
}
for sufficiently large $\rho > 0$. 
Thus, we find that $\mathcal{T}$ is a contraction mapping. 
This completes the proof. 
\end{proof}

\begin{proof}[Proof of Theorem \ref{thm-singular-disk}]
Let $u_{\infty}(r) = y_{\infty}(\ro)$, where $y_{\infty}(\ro)$ 
is the solution to \eqref{ODE} obtained by Proposition 
\ref{local-singular}. 
We note that $u_{\infty}$ satisfies the following:
\EQ{\label{eq-expo-rad}
- \frac{d^{2} u_{\infty}}{d r^{2}} - \frac{1}{r} \frac{d u_{\infty}}{d r}
= u_{\infty} (e^{u_{\infty}^{2}} - 1)\qquad \mbox{for 
$r \in (0, R_{\infty})$}, 
} 
where $R_{\infty} = e^{- \Lambda_{\infty}/2}$. 
Since $u_{\infty}$ is a solution to the ordinally differential 
equation \eqref{eq-expo-rad}, we can extend $u_{\infty}$ 
in the positive direction of $r$ as long as $u_{\infty}$ remains 
bounded. 
We claim that $u_{\infty}$ has a zero at some point. 
Suppose the contrary that $u_{\infty}(r) >0$ for all $0 < r < \infty$. 
Then, we see that $u_{\infty}$ is monotone decreasing.
Indeed, if not, there exists a local minimum point $r_{*} \in (0,
\infty)$. It follows that
$\partial^{2}_{r} u_{\infty}(r_{*}) \geq 0$ and
$\partial_{r} u_{\infty}(r_{*}) = 0$.
Then, from the equation \eqref{eq-expo-rad}, we obtain
\EQ{
0 \leq \frac{d^{2} u_{\infty}}{d r^{2}}(r_{*})
= - u_{\infty}(r_{*}) (e^{u_{\infty}^{2}(r_{*})} - 1) < 0,
}
which is a contradiction.
\par
Since $u_{\infty}$ is positive and monotone decreasing,
there exists a constant $C_{\infty} \geq 0$ such that
$u_{\infty}(r) \to C_{\infty}$ as $r \to \infty$. 
Suppose the contrary that $C_{\infty} > 0$. 
This together with \eqref{eq-expo-rad} yields that
\EQ{
0 = \lim_{r \to \infty} 
(\frac{d^{2} u_{\infty}}{d r^{2}}(r) + \frac{1}{r} \partial_{r} u_{\infty} (r)) 
=  - \lim_{r \to \infty}
(e^{u_{\infty}^{2}(r)} - 1)u_{\infty}(r) < 0,
}
which is absurd.
Therefore, we see that $C_{\infty} = 0$, that is, 
$\lim_{r \to \infty}u_{\infty}(r) = 0$.  
Multiplying \eqref{eq-expo-rad} by $r$ and integrating the resulting equation 
from $0$ to $r$ yields that 
\EQ{
-r \frac{d u_{\infty}}{d r}(r) 
= \int_{0}^{r} s u_{\infty} (e^{u_{\infty}^{2}} - 1) ds \geq 0. 
}
This yields that for any $R>0$, 
there exists a constant $C_{1} >0$ such that 
$- d u_{\infty}/dr(r) \geq C_{1}/r$ for all $r > R$. 
It follows that 
\EQ{
u_{\infty}(r) - u_{\infty}(R)
= \int_{R}^{r} \frac{d u_{\infty}}{d s}(s) ds \leq - C_{1}
\int_{R}^{r} \frac{1}{s} ds. 
}
Letting $r \to \infty$, we have 
\EQ{
- u_{\infty}(R)
= \lim_{r \to \infty}(u_{\infty}(r) - u_{\infty}(R)) 
\leq - C_{\infty} \lim_{r \to \infty} \int_{R}^{r} \frac{1}{s} ds
= -\infty, 
}
which is a contradiction. 
Therefore, there exists $R_{\infty} > 0$ such that 
$u_{\infty}(r)$ has a zero at $r = R_{\infty}$. 
This completes the proof.
\end{proof}
\subsection{Singular soliton by the shooting}
Let $R\in(0,R_\I)$ be the unique point such that $u_\I(R)=2$. 
For each $m\ge 0$, we put 
\EQ{ \label{expo-nonlinear}
f_{m}(s) := s(e^{s^2}-1)-m\chi(s)s, 
} 
where $\chi\in C^\I(\R)$ is a cut-off function 
satisfying $\chi(t)=1$ for $|t|\le 1, \chi(t)=0$ for $|t|\ge 2$, and $t\chi'(t)\le 0$ for all $t\in\R$. 
Consider a family of radial ODE's with a parameter $m\ge 0$:
\EQ{ \label{eq-expo-m}
 \CAS{- \fy_{*}^{\prime \prime}
- \frac{\fy_{*}^{\prime}}{r} = f_m(\fy_{*}), &(r>R)\\ 
\fy_{*}(R)=u_\I(R)=2,\ \fy_{*}^{\prime}(R)= u_\I^{\prime}(R),}}
where the prime mark denotes the differentiation with respect to $r$. 
Let $\phi_m$ be the unique solution of the above. 
We shall show that there exists $m_{*}>0$ such that 
$\phi_{m_{*}}(r)\searrow 0$ as $r\to\I$. 
To this end, we show the following:
\begin{prop} \label{thm-small-large}
Let $m\ge 0$ and $\phi_{m}$ be the solution to \eqref{eq-expo-m}.
There exists $m_{S}>0$ and $m_{L}>0$ such that 
if $m \in [0, m_{S})$, 
$\phi_{m}$ has a zero in $(R, \infty)$,  
and if $m \in (m_{L}, \infty)$, 
$\phi_{m}(r)$ is positive for all $r \ge R$. 
\end{prop}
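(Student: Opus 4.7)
The plan is to handle the two regimes separately: continuous dependence on $m$ gives the small-$m$ case, and a Pohozaev-type energy monotonicity gives the large-$m$ case.

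\textbf{Small $m$.} When $m=0$, $f_0(s)=s(e^{s^2}-1)$ and the Cauchy data of $\phi_0$ at $r=R$ coincide with $(u_\I(R), u_\I'(R))$, so ODE uniqueness gives $\phi_0\equiv u_\I$ on $(R,\I)$. The proof of Theorem \ref{thm-singular-disk} already showed that $u_\I$ cannot remain positive on the entire half-line, so it has a first zero at some $r_0\in(R,\I)$. That crossing is transversal: if $\phi_0(r_0)=\phi_0'(r_0)=0$, Cauchy uniqueness (together with $f_0(0)=0$) would give $\phi_0\equiv 0$, contradicting $\phi_0(R)=2$. Standard continuous dependence of ODE solutions on the parameter $m$ on the compact interval $[R, r_0+1]$ then yields $m_S>0$ such that for every $m\in[0,m_S)$ the sign change of $\phi_0$ near $r_0$ persists for $\phi_m$.

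\textbf{Large $m$.} I would introduce the energy
\EQ{H(r) := \tfrac{1}{2}\phi_m'(r)^2 + F_m(\phi_m(r)), \qquad F_m(s) := \int_0^s f_m(t)\,dt.}
Differentiating and using \eqref{eq-expo-m} gives $H'(r) = -\phi_m'(r)^2/r \le 0$, so $H$ is nonincreasing. Evaluating at $r=R$, with $\phi_m(R)=2$,
\EQ{H(R) = \tfrac{1}{2}u_\I'(R)^2 + \tfrac{1}{2}(e^4-1) - 2 - m\int_0^{2} t\chi(t)\,dt.}
Since $\int_0^{2} t\chi(t)\,dt\ge\int_0^1 t\,dt = \tfrac{1}{2}>0$, there exists $m_L>0$ with $H(R)<0$ for every $m>m_L$. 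If $\phi_m(r_1)=0$ for some $r_1>R$, then $F_m(\phi_m(r_1))=0$, so $H(r_1) = \tfrac{1}{2}\phi_m'(r_1)^2\ge 0$, contradicting $H(r_1)\le H(R)<0$. Hence $\phi_m$ has no zero on $[R,\I)$; since $\phi_m(R)=2>0$, it remains positive throughout.

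Global existence of $\phi_m$ on $[R,\I)$ --- needed in both regimes for the above arguments to make sense --- follows from the same energy: the coercivity $F_m(s)\to+\I$ as $|s|\to\I$ (the $\tfrac{1}{2}e^{s^2}$ term dominates the bounded cutoff correction $-m\int_0^{|s|} t\chi(t)\,dt$) together with $H(r)\le H(R)$ confines $\phi_m$ and $\phi_m'$ on every compact $r$-interval, ruling out finite-$r$ blow-up. The only mildly delicate point is the transversality of the first zero of $\phi_0$, which as indicated reduces to Cauchy uniqueness at the zero equilibrium; the rest of the argument is standard ODE theory together with the explicit sign computation of $H(R)$.
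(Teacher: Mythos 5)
Your proposal is correct and follows essentially the same route as the paper: for small $m$, continuous dependence on the parameter together with the transversal first zero of $\phi_0 = u_\infty$ (at $R_\infty$); for large $m$, the monotone energy $E_m(r) = \tfrac12\phi_m'(r)^2 + F_m(\phi_m(r))$ with $E_m(R) = \tfrac12 u_\infty'(R)^2 + \tfrac{e^4-5}{2} - m\int_0^2 s\chi(s)\,ds < 0$, which forbids any zero since a zero would force $E_m \ge 0$ there. The paper packages the large-$m$ contradiction as Lemma \ref{positive-dec} rather than arguing it inline, and it does not spell out the transversality or global existence remarks you add, but these are small refinements, not a different argument.
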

%To prove Proposition \ref{thm-small-large}, 
%we prepare several lemmas. 
First, we define an energy function $E_{m}:[R,\I)\to\R$ by 
\EQ{
E_{m}(r) := \frac{(\phi_{m}^{\prime}(r))^{2}}{2}  + F_m(\phi_m(r)),}
where $F_m$ is the nonlinear potential energy defined by 
\EQ{ 
 F_m(u):=\int_0^u f_m(s)ds= \frac{e^{u^2} - 1 - u^2}{2} - m \int_{0}^{u} \chi(s) s ds,}
which enjoys the standard superquadratic condition:
\EQ{ \label{SQC}
 \pt 0\le G_m(u):=uF_m'(u)-2F_m(u)=\sum_{k=2}^\I\frac{(k-1)}{k!}u^{2k} 
+ 2 m\int_0^u(\chi(s)-\chi(u))sds,
 \pr G_m(u)=0 \iff u=0,}
thanks to the monotonicity of $\chi$. 
It follows from \eqref{eq-expo-m} that  
\EQ{ \label{monotone-energy}
E_{m}^{\prime}(r) = - \frac{(\phi_{m}^{\prime}(r))^{2}}{r} \le 0. 
}
Thus, $E_{m}(r)$ is a non-increasing function of $r$. 
Using \eqref{monotone-energy}, we shall show the following:
\begin{lem} \label{positive-dec}
Let $m>0$ and $\phi_{m}$ be a solution to \eqref{eq-expo-m}. 
Let $s\ge R$, $E_{m}(s) <0$ and $\phi_{m}(s)>0$.  
Then, we have $\phi_{m}(r) >0$ for all $r \in (s, \infty)$. 
\end{lem}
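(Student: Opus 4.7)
The plan is a direct contradiction argument based solely on the monotonicity \eqref{monotone-energy} of the energy $E_m$ and the nonnegativity of the kinetic part.

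First I would assume, for contradiction, that there exists $r \in (s,\infty)$ with $\phi_m(r) \le 0$. Since $\phi_m(s)>0$ and $\phi_m$ is continuous on $[R,\infty)$, the quantity
\EQN{
r_0 := \inf\{\, r > s : \phi_m(r) = 0 \,\}
}
is well defined and satisfies $r_0 > s$ and $\phi_m(r_0)=0$, with $\phi_m(r)>0$ on $[s, r_0)$.

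Next, I would evaluate the energy at $r_0$. Because $F_m(0)=0$ by the definition of $F_m$, we have
\EQN{
E_m(r_0) = \tfrac{1}{2}(\phi_m'(r_0))^2 + F_m(0) = \tfrac{1}{2}(\phi_m'(r_0))^2 \ge 0.
}
On the other hand, the monotonicity identity \eqref{monotone-energy} gives $E_m'(r) \le 0$ for all $r \ge R$, so
\EQN{
E_m(r_0) \le E_m(s) < 0,
}
which contradicts the previous inequality. Hence no such $r$ exists, proving $\phi_m(r)>0$ for all $r \in (s,\infty)$.

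There is no real obstacle here: the only ingredients are the sign $F_m(0)=0$, the non-increase of $E_m$ along the flow, and the fact that a first zero past $s$ would necessarily produce a nonnegative energy, incompatible with the strictly negative initial energy $E_m(s)$.
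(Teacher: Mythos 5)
Your proof is correct and follows essentially the same route as the paper: contradiction via the first zero of $\phi_m$ past $s$, the nonnegativity $E_m(r_0) = \tfrac12(\phi_m'(r_0))^2 \ge 0$ (using $F_m(0)=0$), and the monotonicity $E_m(r_0)\le E_m(s)<0$. The only difference is that you spell out the construction of the first zero via an infimum, while the paper simply invokes the existence of a zero; this is the same argument.
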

\begin{proof}
Suppose the contrary. Then there exists $z \in (s, \infty)$ such that 
$\phi_{m}(z) = 0$. 
This together with \eqref{monotone-energy} yields that 
\EQ{
0>  E_{m}(s) \geq E_{m}(z) = \frac{(\phi_{m}'(z))^{2}}{2} \ge 0,  
}
which is a contradiction. 
\end{proof} 

We are now in a position to prove Proposition \ref{thm-small-large}. 
\begin{proof}[Proof of Proposition \ref{thm-small-large}]
We note that $m = 0$, $\phi_{0}(r) = u_{\infty}$ has a zero 
at $r = R_{\infty}$. 
Then, by the continuity of $\phi_m$ for $m$, 
we see that $\phi_{m}(r)$ also has a zero if $m>0$ is sufficiently small. 

On the other hand, we have 
\EQ{
 E_m(R)=\frac{(u_\I'(R))^2}{2}+\frac{e^4-5}{2}-m\int_0^2\chi(s)sds < 0}
for large $m>0$, then Lemma \ref{positive-dec} implies that $\phi_m(r)>0$ for all $r>R$. 
\end{proof}

We put 
\EQ{ \label{inf-m}
m_{*} = \inf\left\{m >0 \mid \phi_{m} (r) >0 \quad \mbox{on $r > R$ 
%and $\phi_{m}^{\prime}(r_{m}) = 0$ for some $r_{m}>0$
} \right\}. 
}
We see from Proposition \ref{thm-small-large}
that $0 < m_{*} < \infty$. 
Moreover, we have the following: 
\begin{thm} \label{exist-sing-entire}
Let $m_{*} >0$ be the number defined by \eqref{inf-m}. 
Then, $\phi_{m_{*}}$ is a singular positive radial solution to 
the following elliptic equation
\begin{equation} \label{el-expo}
\begin{cases}
- \Delta \phi = f_{m_{*}}(\phi) & \qquad \mbox{in $\R^{2}$}, \\ 
\lim_{x \to \infty}\phi(x) = 0, &
\end{cases}
\end{equation}
where $f_{m}(s)$ is defined by \eqref{expo-nonlinear}. 
Moreover, $\phi_{m_{*}}$ is strictly decreasing in the radial direction. 
Moreover, for any $m\in(0,m_*)$, there exists $C_m\in(0,\I)$ such that 
\EQ{ \label{expo-decay}
\phi_{m_{*}}(r)+|\phi_{m_{*}}^{\prime}(r)| \le C_me^{-\sqrt{m} r} \qquad 
\mbox{for all $r \ge R$}. 
}
\end{thm}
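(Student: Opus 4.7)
The plan is to glue the disk solution $u_\I$ to the shooting solution $\phi_{m_*}$ and extract positivity, monotonicity and exponential decay from the infimum characterization of $m_*$ via the energy $E_{m_*}$. Since $u_\I\ge 2$ on $[0,R]$ and $\chi\equiv 0$ outside $[-2,2]$, the nonlinearities $f_0$ and $f_{m_*}$ agree on the range of $u_\I$, so $u_\I$ also solves the ODE with $f_{m_*}$ on $[0,R]$. The matching initial conditions at $r=R$ imposed in \eqref{eq-expo-m} then make the glued function $C^2(\R^2\setminus\{0\})$ and singular at the origin with the expansion \eqref{sing}; the local integrability of $u_\I$ together with the fact that the boundary terms on $\partial B_\e$ are $o(1)$ as $\e\to 0$ (thanks to the logarithmic rate in \eqref{sing}) promote this to a distributional solution on all of $\R^2$. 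For non-negativity, a sequence $m_k\searrow m_*$ with $\phi_{m_k}>0$ converges by continuous dependence on the parameter to $\phi_{m_*}\ge 0$, and ODE uniqueness upgrades this to strict positivity $\phi_{m_*}>0$ on $[R,\I)$: a zero $r_0$ would be a minimum, forcing $\phi_{m_*}'(r_0)=0$ and hence $\phi_{m_*}\equiv 0$, contradicting $\phi_{m_*}(R)=2$.

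The key step is the energy bound $E_{m_*}(r)\ge 0$ for every $r\ge R$. Suppose for contradiction that $E_{m_*}(r_0)<0$ at some $r_0$. By uniform continuity of $\phi_m$ and $E_m$ in $m$ on the compact interval $[R,r_0]$, there exists $\de>0$ such that for $m\in(m_*-\de,m_*)$ one still has $\phi_m>0$ on $[R,r_0]$ and $E_m(r_0)<0$. Lemma \ref{positive-dec} applied at $s=r_0$ then yields $\phi_m>0$ on all of $(R,\I)$, contradicting $m_*=\inf\{m:\phi_m>0\}$. From this bound I deduce strict monotonicity $\phi_{m_*}'<0$: at a first $r_1>R$ where $\phi_{m_*}'(r_1)=0$, we have $\phi_{m_*}''(r_1)\ge 0$ and the ODE gives $f_{m_*}(\phi_{m_*}(r_1))\le 0$; since $g(s)=e^{s^2}-1-m_*\chi(s)$ satisfies $g'(s)=2se^{s^2}-m_*\chi'(s)>0$ on $(0,\I)$ thanks to $s\chi'(s)\le 0$, the function $f_{m_*}$ has a unique positive zero $c_{m_*}$, and $F_{m_*}<0$ on $(0,c_{m_*}]$. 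But then $E_{m_*}(r_1)=F_{m_*}(\phi_{m_*}(r_1))<0$ contradicts $E_{m_*}\ge 0$.

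For the asymptotic behaviour, $\phi_{m_*}$ is positive and strictly decreasing, so has a limit $\phi_\I\ge 0$; the bound $E_{m_*}\le E_{m_*}(R)$ and the coercivity of $F_{m_*}$ at infinity give $\phi_{m_*}$ and $\phi_{m_*}'$ bounded, and then $\phi_{m_*}''$ bounded too. Together with $\phi_{m_*}'\in L^1([R,\I))$ (from monotonicity) this forces $\phi_{m_*}'\to 0$ and hence $\phi_{m_*}''\to 0$, so $f_{m_*}(\phi_\I)=0$, i.e.\ $\phi_\I\in\{0,c_{m_*}\}$. The value $c_{m_*}$ is excluded by $E_{m_*}(\I)=F_{m_*}(c_{m_*})<0$ against $E_{m_*}\ge 0$, so $\phi_{m_*}(r)\to 0$. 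For the exponential decay, fix $m\in(0,m_*)$: since $f_{m_*}(u)/u\to -m_*<-m$ as $u\to 0$ by \eqref{asy-non}, one has $f_{m_*}(\phi_{m_*})\le -m\phi_{m_*}$ once $r\ge r_0$ is sufficiently large. The comparison function $w:=Ce^{-\sqrt{m}r}-\phi_{m_*}$ then satisfies $-w''-w'/r+mw\ge 0$ on $[r_0,\I)$; choosing $C$ so that $w(r_0)\ge 0$ and using $w(\I)=0$, the maximum principle for the modified Bessel operator gives $\phi_{m_*}(r)\le Ce^{-\sqrt{m}r}$. Integrating $(r\phi_{m_*}')'=-rf_{m_*}(\phi_{m_*})$ from $r$ to $\I$ transfers this pointwise decay to $|\phi_{m_*}'|$.

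The delicate point is the proof that $E_{m_*}\ge 0$: this is where the infimum definition of $m_*$ is genuinely used, via Lemma \ref{positive-dec} together with continuous dependence on the parameter. All subsequent steps are standard ODE asymptotic analysis based on this single energy bound.
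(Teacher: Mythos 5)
Your proof is correct and reproduces the paper's strategy: strict positivity of $\phi_{m_*}$ via a limit $m_k\searrow m_*$ together with ODE uniqueness; the key bound $E_{m_*}\ge 0$ by continuity in the parameter $m$ combined with Lemma \ref{positive-dec}; strict monotonicity and $\phi_{m_*}(\infty)=0$ deduced from that bound; and exponential decay from the linearized inequality $f_{m_*}(u)\le -mu$ near $u=0$. Two details differ in form but not in substance. Where the paper invokes the superquadratic quantity $G_{m_*}$ of \eqref{SQC} both to rule out a critical point with $f_{m_*}(\phi_{m_*})\le0$, $F_{m_*}(\phi_{m_*})\ge0$ and to exclude a nonzero limit $C_*$, you instead exploit the concrete factorization $f_{m_*}(u)=u\bigl(e^{u^2}-1-m_*\chi(u)\bigr)$ to identify the unique positive zero $c_{m_*}$ and the sign $F_{m_*}<0$ on $(0,c_{m_*}]$; the two formulations are equivalent (indeed $G_{m_*}(c_{m_*})=-2F_{m_*}(c_{m_*})>0$), yours being tied to the specific nonlinearity rather than to the abstract hypothesis \eqref{SQC}. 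For the decay, the paper observes that $\phi_{m_*}''>m\phi_{m_*}>0$ makes $e^{\sqrt{m}r}(\sqrt{m}-\partial_r)\phi_{m_*}$ monotone decreasing, delivering the bounds on $\phi_{m_*}$ and $|\phi_{m_*}'|$ in a single stroke; you first control $\phi_{m_*}$ by a maximum-principle comparison with $Ce^{-\sqrt{m}r}$ (noting that it is a strict supersolution of $-w''-w'/r+mw=0$) and then transfer to $\phi_{m_*}'$ by integrating $(r\phi_{m_*}')'=-rf_{m_*}(\phi_{m_*})$, which is slightly longer but equally valid. Your explicit remark on why the glued function is a distributional solution on all of $\R^2$, using the vanishing of the boundary integrals on $\partial B_\varepsilon$ forced by the logarithmic rate in \eqref{sing}, is a useful detail the paper leaves implicit.
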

\begin{proof}
To prove Theorem \ref{exist-sing-entire}, it suffices 
to show that $\phi_{m_{*}}(r) > 0, \phi_{m_{*}}^{\prime}(r) < 0$ 
for all $r > 0$, $\lim_{r \to \infty} \phi_{m_{*}}(r) = 0$ and \eqref{expo-decay}. 

\par
First, we shall show that $\phi_{m_{*}}(r) >0$ on $r \in (R, \infty)$. 
By definition of $m_*$, there exists a sequence 
$m_{n} \searrow m_{*}$ such that 
$\phi_{m_{n}}(r) >0$ for all $r> R$ and $n \in \mathbb{N}$. 
Then $\phi_{m_*}(r)=\lim_{n\to\I}\phi_{m_n}(r) \ge 0$ for all $r> R$. 
If $\phi_{m_*}(r)=0$ at some $r>R$, then $\phi_{m_*}'(r)=0$ and so $\phi_{m_*}\equiv 0$ by the ODE, a contradiction. Hence $\phi_{m_*}(r)>0$ for all $r>R$. 

Next, we claim that 
\EQ{ \label{posi-energy-m}
E_{m_{*}}(r) \geq 0 \qquad \mbox{for all $r \in (R, \infty)$}. 
}
Suppose the contrary that there exists $R_{*}>0$ such that 
$E_{m_{*}}(R_{*}) < 0$. 
Then the continuity for $m$ yields that 
$\phi_m(r)>0$ on $R\le r\le R_*$ and $E_{m}(R_{*}) < 0$ when $m\in(0,m_*)$ is close enough to $m_*$. 
Then Lemma \ref{positive-dec} implies $\phi_m(r)>0$ for $r\ge R_*$, hence for all $r>R$,  
contradicting the definition of $m_*$. Hence we have \eqref{posi-energy-m}. 

Next, we shall show that $\phi_{m_{*}}^{\prime}(r) < 0$ for all 
$r > R$. 
Suppose the contrary and let $s>R$ be the first zero of $\phi_{m_*}'$. 
Then we have $0=\phi_{m_*}'(s)\le\phi_{m_*}''(s)=-f_{m_*}(\phi_{m_*}(s))$, 
$0\le E_{m_*}(s)=F_{m_*}(\phi_{m_*}(s))$, and so $G_{m_*}(\phi_{m_*}(s))\le 0$, contradicting \eqref{SQC}. 

Therefore $\phi_{m_*}'(r)<0<\phi_{m_*}(r)$ for all $r>R$, so $\phi_{m_*}(r)\searrow \exists C_*\in[0,2)$ and $\phi_{m_*}'(r)\to 0$ as $r\to\I$. 
Then we have $0\le\lim_{r\to\I}E_{m_*}(r)=F_{m_*}(C_*)$ and 
\EQ{
 \phi_{m_*}''(r)=-\phi_{m_*}'(r)/r-f_{m_*}(\phi_{m_*}(r))\to -f_{m_*}(C_*) \pq(r\to\I),} 
which has to be $0$ because $\phi_{m_*}'(r)\to 0$. Hence $G_{m_*}(C_*)\le 0$ and so $C_*=0$ by \eqref{SQC}. 

Finally, let $m\in(0,m_*)$. Then $\phi_{m_*}(r)\searrow 0$ together with the definition of $f_m$ implies 
\EQ{
 \phi_{m_*}''(r) = -\phi_{m_*}'(r)/r - f_{m_*}(\phi_{m_*}(r)) > m \phi_{m_*}(r) > 0}
for sufficiently large $r>R$. Hence $e^{\sqrt{m}r}(\sqrt{m}-\p_r)\phi_{m_*}(r)$ is decreasing for large $r$, which implies the desired exponential decay. 
\end{proof}
\begin{rem}
Let $\eta(\rho)$ be the solution to \eqref{eq-eta-infty}, 
obtained in the proof of Proposition \ref{local-singular}. 
Since $|\eta(\rho)| \lesssim \rho^{-3/2} \log \rho$, 
we see that $\phi_{m_{*}}$ satisfies
\EQ{ \label{sing-phi}
\pt 
\phi_{m_{*}}(r) 
= \left(-2 \log r -2 \log (-\log r) -2 \log 2 \right)^{\frac{1}{2}} 
 + O((-\log r)^{- \frac{3}{2}} \log (-\log r)) \quad 
\mbox{as $r \to 0$.}  
}
Moreover, we have, by \eqref{eq-eta-infty}, 
Lemmas \ref{lem-sin} and \ref{estimate-f}, that 
$|d^{2} \eta/d \rho^{2} (\rho)| \lesssim \rho^{-5/2} \log \rho$. 
Thus, by integrating, we see that 
$|d \eta/d \rho (\rho)| \lesssim \rho^{-3/2} \log \rho$.  
Thus, $\phi_{m_{*}}^{\prime}$ satisfies 
\EQ{\label{sing-phi-d}
\pt \phi_{m_{*}}^{\prime}(r) 
= - \left(-2 \log r -2 \log (-\log r) -2 \log 2\right)^{-\frac{1}{2}}
\left(\frac{1}{r} + \frac{1}{r\log r} \right) 
\pr \hspace{2.05cm}
 + O((-\log r)^{- 3/2} \log (-\log r)) 
\quad \mbox{as $r \to 0$.}
}
 
\end{rem}

\section{Regular solution by the heat iteration}
\label{regular-sol}
In what follows, we denote $\phi_{m_{*}}$, which is obtained in Theorem 
\ref{exist-sing-entire}, by $\fy_{*}$. 
\begin{thm} \label{regular-thm}
Let $u_0:=e^{t\De}\fy_{*}$. 
Then for any $t>0$, $u_0(t)$ is bounded on $\R^2$. 
Moreover, there exists a small $T>0$, and a solution $u_{R}$ to 
\eqref{eq-Heat-exp-m} 
with $u_{R}|_{t=0} = \varphi_{*}$ satisfying 
\EQ{
u_{R} - u_0 \in |\log t|^{-1/2}L^\I([0, T) \times \R^{2}).}
\end{thm}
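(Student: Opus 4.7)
The plan is to write $u_R = u_0 + v$ with $u_0 := e^{t\Delta}\varphi_*$ and to construct $v$ as the fixed point of
\EQ{
v(t) = \Phi(v)(t) := \int_0^t e^{(t-s)\Delta} f_{m_*}(u_0(s)+v(s))\,ds
}
by Banach's theorem in the space $Y_T := \Br{v \in C((0,T]; L^\I(\R^2)) \mid \|v\|_{Y_T} := \sup_{0<t<T} |\log t|^{1/2} \|v(t)\|_\I < \I}$ for $T>0$ sufficiently small.

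The first step is a sharp pointwise bound for $u_0$. Since $\varphi_*$ is radial and strictly radially decreasing by Theorem \ref{exist-sing-entire}, the monotonicity property proved in the Appendix yields that $u_0(t,\cdot)$ is likewise radial and radially decreasing, so $\|u_0(t)\|_\I = u_0(t,0)$. Changing to polar coordinates and substituting $s = r^2/(4t)$ gives $u_0(t,0) = \int_0^\I e^{-s}\varphi_*(2\sqrt{ts})\,ds$, and plugging in the singular expansion \eqref{sing-phi} of $\varphi_*$ near $r=0$ yields
\EQ{
u_0(t,0)^2 = |\log t| - 2\log|\log t| + (\gamma - \log 4) + o(1) \qquad (t\to 0^+),
}
hence $e^{u_0(t,0)^2} \lesssim (t|\log t|^2)^{-1}$. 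Combined with the exponential decay \eqref{expo-decay} of $\varphi_*$ at infinity, this proves the first statement $u_0(t)\in L^\I(\R^2)$ for every $t>0$.

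The key step is the estimate of $\Phi$. The pointwise bound $|f_{m_*}(u_0(s))| \le u_0(s,0)\, e^{u_0(s,0)^2}$ is too crude in $L^\I$; however, $f_{m_*}(u_0)$ is spatially concentrated on $|x|\lesssim\sqrt{s}$ (outside this ball $u_0$ is controlled by $\varphi_*(x)\sim \sqrt{-2\log|x|}$, strictly below the peak), which together with the decay \eqref{sing-phi} gives
\EQ{
\|f_{m_*}(u_0(s))\|_{L^2(\R^2)} \lesssim \frac{1}{\sqrt{s}\,|\log s|^{3/2}}.
}
Using the heat smoothing $\|e^{(t-s)\Delta}g\|_\I \lesssim (t-s)^{-1/2}\|g\|_{L^2}$ then gives
\EQ{
\|\Phi(0)(t)\|_\I \lesssim \int_0^t \frac{ds}{(t-s)^{1/2}\sqrt{s}|\log s|^{3/2}} \lesssim \frac{1}{|\log t|^{3/2}},
}
so $\|\Phi(0)\|_{Y_T} \lesssim |\log T|^{-1}$. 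For $v \in \mathcal B_K := \{v\in Y_T \mid \|v\|_{Y_T}\le K\}$ the bound $|v(s)|\le K|\log s|^{-1/2}$ gives $(u_0+v)^2 \le u_0^2 + 2K + o(1)$, so the analogous $L^2$-estimates carry over with an overall factor $e^{2K}$, and likewise $\|f_{m_*}'(u_0+v)\|_{L^2} \lesssim e^{2K}(\sqrt{s}|\log s|)^{-1}$ yields
\EQ{
\|\Phi(v)\|_{Y_T} \lesssim \frac{e^{2K}}{|\log T|}, \qquad \|\Phi(v_1)-\Phi(v_2)\|_{Y_T} \lesssim \frac{e^{2K}}{|\log T|}\|v_1-v_2\|_{Y_T}.
}
Choosing $T$ small enough relative to $e^{2K}$ makes $\Phi$ a contraction self-map on $\mathcal B_K$, delivering $v\in Y_T$.

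The main obstacle is precisely the exponential amplification $e^{2K}$ arising from the resonance $u_0\cdot|\log s|^{-1/2}\to 1$ at the peak: a naive $L^\I$-based Picard iteration saturates at constants of order $e^{2K}$ independent of $T$ and fails to close the bootstrap. The resolution is to exploit the spatial concentration of $f_{m_*}(u_0)$ near the origin and replace the trivial $L^\I$ estimate by an $L^2$-to-$L^\I$ heat-smoothing bound, which recovers an extra factor $|\log T|^{-1}$ that absorbs the amplification for $T$ small. Once $v\in Y_T$ is constructed, $u_R := u_0+v \in C([0,T); X)$ is a strong solution with $u_R(0)=\varphi_*$ satisfying $u_R - u_0 \in |\log t|^{-1/2}L^\I([0,T)\times\R^2)$, which is exactly the conclusion of Theorem \ref{regular-thm}.
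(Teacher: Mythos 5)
Your overall plan — writing $u_R = u_0 + v$ with $u_0 = e^{t\Delta}\varphi_*$ and running a Banach fixed-point argument for $v$ in the space $\{v : \sup_{0<t<T}|\log t|^{1/2}\|v(t)\|_\infty < \infty\}$ — is exactly the paper's plan, including the choice of weight $|\log t|^{1/2}$ and the decay rate $|\log t|^{-3/2}$ for the first Duhamel iterate. The main divergence is in the mechanism used to estimate the Duhamel integral. The paper works entirely with pointwise bounds: it derives the two-variable estimate (its Lemma~\ref{first-iteration})
\[
 u_0(t,x) \le \min\{\varphi_*(\sqrt{t}),\varphi_*(x)\} + O(|\log t|^{-1/2}),
\]
which leads to $u_0 e^{u_0^2} \lesssim \min\{(t\ell^{3/2})^{-1},(r^2\rho^{3/2})^{-1}\}$, and then explicitly computes the action of the heat semigroup on such weights (Lemma~\ref{integral-formula}). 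You instead take the same pointwise bound on $f_{m_*}(u_0)$, integrate it in $x$ to get $\|f_{m_*}(u_0(s))\|_{L^2}\lesssim s^{-1/2}|\log s|^{-3/2}$, and use the elementary $L^2\to L^\infty$ heat smoothing $\|e^{\tau\Delta}g\|_\infty\lesssim \tau^{-1/2}\|g\|_{L^2}$. Once the pointwise estimate on $u_0$ is in hand, your $L^2$ route is a legitimate and arguably cleaner way to carry out the Duhamel step than the paper's direct kernel computation (provided you also treat the large-$|x|$ tail, which is harmless thanks to \eqref{expo-decay}, and check it pushes through the Lipschitz estimate with the $e^{2K}$ amplification as you indicate).

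The genuine gap is that the hardest part of the proof is asserted in one sentence. You state that for $|x|\gtrsim\sqrt{s}$, ``$u_0$ is controlled by $\varphi_*(x)$, strictly below the peak,'' and you compute $u_0(t,0)$ sharply on the axis. But the paper's Lemma~\ref{first-iteration} — the claim $u_0(t,x)\le \min\{\varphi_*(\sqrt{t}),\varphi_*(x)\}+O(|\log t|^{-1/2})$ uniformly in both variables — is not an immediate consequence of radial monotonicity plus the on-axis asymptotics. The delicate point is that $\varphi_*$ diverges at the origin like $(2|\log r|)^{1/2}$, so $e^{t\Delta}\varphi_*$ involves averaging an unbounded function against the heat kernel, and showing the average does not overshoot $\varphi_*(x)+O(\ell^{-1/2})$ in the intermediate regime $1\ll |x|^2/t \ll |\log t|$ requires the careful region-splitting carried out in the paper (inside the parabolic cylinder $\nu\lesssim 1$, far outside $\nu > 8\log\ell$, and the intermediate range), together with the mean-value estimate \eqref{estimate-fy-z-2} exploiting \eqref{sing-phi-d}. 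Without this lemma your $L^2$ bound $\|f_{m_*}(u_0(s))\|_{L^2}\lesssim s^{-1/2}|\log s|^{-3/2}$ is unjustified: knowing $\|u_0(s)\|_\infty = u_0(s,0)$ alone gives only the crude pointwise bound $|f_0(u_0)|\lesssim (s|\log s|^{3/2})^{-1}$ on all of $\R^2$, which is not square-integrable and does not yield the needed decay. So the contraction as you have written it does not close; you must supply the proof of the two-variable pointwise estimate (or an equivalent off-axis control of $u_0$) before the $L^2$ machinery can run.
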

%\begin{rem}
%Since $\fy_{*}\not\in L^\I(\R^2)$, we conclude that $u_S(t)\not=u_\varepsilon(t)$ for %all $t\in(0,T]$. 
%\end{rem}

Note that $\fy_{*}\in C^\I(\R^2\setminus\{0\})$ is a positive radial function 
satisfying the asymptotic form, 
\EQ{ \label{asymptotic-phi}
 \fy_{*}(r) \pt= (\ro - 2\log \ro)^{1/2}+O(\ro^{-3/2}\log\ro) 
 \pr=(\ro-2\log \ro+O(\ro^{-1}\log\ro))^{1/2} \pq (\ro\to\I),}
where we set $\ro:=|\log r^2|=2|\log r|\gg 1$.
The above two equivalent expressions of remainder will be frequently switched in the following computations. 
%Moreover, there exist $C_{d}>0$ and $\varepsilon_{d}>0$ such that 
%\EQ{ \label{decay-phi}
%\fy_{*}(r) \leq C_{d} e^{- \theta_{\text{d}} r} \qquad \mbox{for $r > \varepsilon_{d}$}.   
%}

\par
We need precise estimate or asymptotic behavior around $t,r\to 0$ of the iteration sequence. 
Consider the first (or zeroth) iteration 
\EQ{
 u_0:=e^{t\De}\fy_{*} = \frac{1}{4\pi t}\int_{\R^2}e^{-\frac{|x-y|^2}{4t}}\fy_{*}(y)dy
 = \int_{\R^2} \frac{e^{-|z|^2/4}}{4\pi}\fy_{*}(x-\sqrt{t}z)dz.}
We shall show the following: 
\begin{lem} \label{first-iteration}
There exists $\varepsilon>0$ such that if $\max\{t, |x|^{2}\} < \varepsilon^{2}$, 
we have 
\EQ{ \label{est-u_0-claim}
u_{0}(t, x) \leq \min\{\varphi_{*}(\sqrt{t}), \varphi_{*}(x)\} + O(|\log t|^{-\frac{1}{2}}). 
}
\end{lem}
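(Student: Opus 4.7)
The plan is to exploit the asymptotic \eqref{sing-phi}: writing $R(y):=-2\log|y|$ and $\Psi(\rho):=(\rho-2\log\rho)^{1/2}$, one has $\varphi_*(y)=\Psi(R(y))+\mathcal{E}(y)$ with $|\mathcal{E}(y)|\lesssim R(y)^{-3/2}\log R(y)$ as $|y|\to 0$. I would split the analysis at the scale $|x|=t^{1/3}$, applying Jensen's inequality inside this ball and a Hessian-based Taylor expansion outside. For $|x|\ge t^{1/3}$ (which forces $|x|\ge\sqrt t$, hence $\min=\varphi_*(x)$), the function $\varphi_*$ is smooth on $B(x,|x|/2)$ with $|H\varphi_*(y)|\lesssim 1/(|y|^2\sqrt{|\log|y||})$; a Taylor expansion (whose linear part averages to zero by radial symmetry of the heat kernel) controls the interior integral by $t\|H\varphi_*\|_{L^\infty}\lesssim t/(|x|^2\sqrt{|\log|x||})$, while the exterior integral over $\{|y-x|>|x|/2\}$ is bounded by $(|x|^2/t)\,e^{-|x|^2/(16t)}\sqrt{|\log|x||}$ using the local integrability $\int_{|y|<1}\sqrt{|\log|y||}\,dy<\infty$. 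Both pieces are $O(|\log t|^{-1/2})$ once $|x|^2/t\ge t^{-1/3}$.

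For $|x|<t^{1/3}$, I would apply Jensen. A direct computation shows $\Psi''<0$ on $[R_0,\infty)$ for some $R_0>0$; set $\delta_0:=e^{-R_0/2}$. The error contribution $\int p_t\mathcal{E}$ is controlled using $|\mathcal{E}|\lesssim 1/R$; the substitution $y=\sqrt{t}\,z$ yields $\int p_t(x-y)/R(y)\,dy = O(1/|\log t|)$. After restricting to $\{|y|\le\delta_0\}$ (the tail contributing $O(e^{-c/t})$) and renormalising to a probability measure, Jensen yields
\[
\int p_t(x-y)\Psi(R(y))\,dy \le \Psi(R_{\mathrm{avg}}(x))+O(e^{-c/t}),\qquad R_{\mathrm{avg}}(x):=(e^{t\Delta}R)(x).
\]
The heat evolution of $R$ can be computed exactly from $\Delta\log|\cdot|=2\pi\delta_0$ in $\mathcal{D}'(\R^2)$ via Duhamel's formula:
\[
R_{\mathrm{avg}}(x) = R(x) - E_1\!\left(\tfrac{|x|^2}{4t}\right),\qquad E_1(u):=\int_u^\infty\frac{e^{-\sigma}}{\sigma}\,d\sigma\ge 0.
\]
Hence $R_{\mathrm{avg}}(x)\le R(x)$ directly, and since $R$ is radially nonincreasing the appendix gives the same for $R_{\mathrm{avg}}$; combined with $E_1(u)\sim-\log u-\gamma$ as $u\to 0$ this identifies $R_{\mathrm{avg}}(0^+)=|\log t|+\gamma-2\log 2$, so $R_{\mathrm{avg}}(x)\le|\log t|+O(1)$.

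Since $\Psi$ is monotone increasing, the two bounds on $R_{\mathrm{avg}}$ give $\Psi(R_{\mathrm{avg}}(x))\le\min\{\Psi(R(x)),\Psi(|\log t|+O(1))\}$. Using $\Psi'(\rho)=O(\rho^{-1/2})$ and \eqref{sing-phi} at $r=\sqrt t$, one obtains $\Psi(|\log t|+O(1))=\varphi_*(\sqrt t)+O(|\log t|^{-1/2})$; and since $R(x)\ge(2/3)|\log t|$ in this regime, $\Psi(R(x))=\varphi_*(x)+O(R(x)^{-3/2}\log R(x))=\varphi_*(x)+o(|\log t|^{-1/2})$. Combining the two regimes yields the claim. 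The main obstacle will be the careful bookkeeping of remainders—Jensen's probability-measure normalization, the truncation to $\{|y|\le\delta_0\}$, the $\mathcal{E}$-correction, and the Hessian-Taylor tail—so that the aggregate error stays below $|\log t|^{-1/2}$.
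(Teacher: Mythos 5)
Your proposal is correct in outline and takes a genuinely different route from the paper. The paper decomposes the space-time region into three parabolic zones based on $\nu=|x|^2/t$ ($\nu\lesssim 1$, $1\ll\nu\le 8\log\ell$, $\nu>8\log\ell$) and in each zone expands the integrand of $u_0=\int p_t(x-y)\varphi_*(y)\,dy$ directly, using the ball $B=\{|x-\sqrt tz|<\delta|x|\}$ and the mean value theorem with $\varphi_*'$ in the two outer zones, together with the Appendix monotonicity lemma (applied to $u_0$ itself) in the inner zone. Your argument instead splits just at $|x|=t^{1/3}$ and, in the inner region, converts the nonlinear profile into a \emph{concave} composition $\Psi(R)$ with $R=-2\log|\cdot|$: Jensen's inequality then transfers the estimate to the heat evolution $e^{t\Delta}R$, which you compute exactly from $\Delta\log|\cdot|=2\pi\delta_0$, obtaining $R_{\mathrm{avg}}(x)=R(x)-E_1(|x|^2/(4t))$. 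The two bounds $R_{\mathrm{avg}}\le R$ and (via radial monotonicity, from the Appendix lemma applied to $R$ rather than to $u_0$) $R_{\mathrm{avg}}(x)\le R_{\mathrm{avg}}(0^+)=|\log t|+\gamma-2\log 2$ then give the two halves of the $\min$ simultaneously. For the outer region you use a second-order Taylor/Hessian estimate, noting the linear term vanishes by symmetry. This is conceptually cleaner than the paper's case-by-case expansion and avoids the intermediate zone $1\ll\nu\le 8\log\ell$ entirely; on the other hand it requires verifying the concavity of $\Psi$ on $[R_0,\infty)$, justifying the distributional computation of $e^{t\Delta}\log|\cdot|$, and carefully tracking the truncation to $\{|y|\le\delta_0\}$ needed so that $R(y)$ stays in the domain of concavity and $\varphi_*\approx\Psi(R)$ holds. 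All the remainders you identify ($\mathcal E$-correction $O(\ell^{-3/2}\log\ell)$, Jensen renormalisation $O(e^{-c/t})$, Hessian term $O(t/(|x|^2\sqrt{|\log|x||}))$, exponentially small tails) do close, so the plan is sound; the stated exterior bound $(|x|^2/t)e^{-|x|^2/(16t)}\sqrt{|\log|x||}$ is a minor misstatement of the crude $L^1$ estimate (a cleaner form is $t^{-1}e^{-\nu/16}\|\varphi_*\|_{L^1}$), but either is far below $|\log t|^{-1/2}$ once $\nu\ge t^{-1/3}$.
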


\begin{proof}
We take $\varepsilon >0$ sufficiently small so that 
for any $r < \varepsilon$, 
$\fy_{*}(r)$ satisfies \eqref{asymptotic-phi}, and 
consider the region $(t, x) \in (0, \infty) \times \R^{2}$ 
satisfying $\max\{t, |x|^{2}\} < \varepsilon^{2}$. Put
\EQ{
	\ell:=-\log t=|\log t| \gg 1,\pq \nu:=|x|^2/t\in[0,\I).}
We shall estimate $u_{0}$ by dividing 
the space-time region $\max\{t, |x|^{2}\} < \varepsilon^{2}$ 
into the three subregions: inside the parabolic cylinder $\nu\lesssim 1$, $\nu > 8 \log \ell \gg 1$ and  $1\ll \nu \leq 8 \log \ell $. 
\par
First, we consider the region $\nu \lesssim 1$. 
It follows from Lemma \ref{monotonicity} that  
\EQ{
	\|u_0\|_{L^\I_x} \pt= u_0(t, 0) = \int_{\R^2} \frac{e^{-|z|^2/4}}{4\pi}\fy_{*}(\sqrt{t}z)dz. 
}
Thus, by \eqref{asymptotic-phi}, 
we obtain 
\EQ{\label{est-u_0-13}
	\|u_0\|_{L^\I_x} 
	\pt = \int_{\R^2} \frac{e^{-|z|^2/4}}{4\pi}\fy_{*}(\sqrt{t}z)dz
	%\pr \lec \int_0^{\frac{\varepsilon^2}{t}} e^{-s/4}(-\log(ts) - 2\log|\log(ts)| + %O(1))^{1/2}ds/4
	%\pr + \int_{\frac{\varepsilon^{2}}{t}}^{\infty} e^{- \frac{s}{4}} 
	%e^{- \theta_{\text{d}}\sqrt{ts}} ds 
	\pr =\int_0^{t} e^{-s/4}(-\log(ts) - 2\log|\log(ts)| + O(1))^{1/2}ds/4
	\pr + \int_{t}^{\frac{\varepsilon^{2}}{t}} e^{-s/4}(-\log(ts) - 2\log|\log(ts)| 
	+ O(1))^{1/2}ds/4
	\pr + \int_{\frac{\varepsilon^2}{t}}^{\infty} e^{- \frac{s}{4}} 
	\varphi_{*}(\sqrt{ts})ds/4  
	\pr =: {\rm I} + {\rm J} + {\rm K}.   
}
%Split the integral to $0<s<t^{1/n}$ and $s>t^{1/n}$ with a parameter $n\in\N$. 
%In the former interval we have $ts>s^{n+1}$, where the integral is bounded by 
For $0 < s < t$, we have $ts>s^{2}$. 
This yields via an integration by parts  
\EQ{ \label{estimate-I}
	{\rm I} \lesssim 
	\int_0^{t} |2 \log s|^{1/2}ds  \lec t \ell^{1/2} 
	\lec \ell^{-1/2}.}
For $t < s < \frac{\varepsilon^2}{t}$, 
we have $|\log s| <\ell$, so the integrand is expanded 
\EQ{
	\pn(-\log(ts) - 2\log|\log(ts)|+O(1))^{1/2}
	\pt=(\ell-2\log\ell - \log s  +O(1))^{1/2}
	\pr=(\ell-2\log\ell)^{1/2} + O(\ell^{-1/2}\LR{\log s})}
Since $\varphi_{*}(r)$ is monotone decreasing in $r > 0$, 
we have 
\EQ{
	\varphi_{*}(\sqrt{ts}) \leq (\ell-2\log \ell)^{1/2} + O(\ell^{-1/2}\LR{\log s})
}
for $s \geq \frac{\varepsilon^{2}}{t}$. 
Integration against $e^{-s/4}$ yields 
\EQ{ \label{estimate-J}
	{\rm J} + {\rm K} \leq \int_{t}^{\infty} e^{-s/4} 
	((\ell-2\log \ell)^{1/2} + O(\ell^{-1/2}\LR{\log s})ds/4
	\leq (\ell-2\log \ell)^{1/2} + O(\ell^{-1/2}). 
}
The estimates \eqref{estimate-I} and \eqref{estimate-J} together 
with \eqref{est-u_0-13} imply  
\EQ{
	u_0(t, 0)-(\ell-2\log\ell)^{1/2} \lec \ell^{-1/2}.}
Namely, we have 
\EQ{ \label{est-u_{0}-s}
	u_{0} \leq \fy_{*}(\sqrt{t}) + O(\ell^{-1/2}). 
}
This is enough in the region $\nu\lec 1$.

Before focusing on the two remaining regions, 
let $0<\de\leq\varepsilon$ be a small parameter and set
\EQ{
 B:=\{z\in \R^2 \mid |x-\sqrt{t}z|<\de|x|\}.}

Decomposing $u_{0}$ as
\EQ{ \label{est-u_0-10}
u_{0}(t, x)
= \int_{B} \frac{e^{-|z|^2/4}}{4\pi}\fy_{*}(x-\sqrt{t}z)dz 
+ \int_{B^c} \frac{e^{-|z|^2/4}}{4\pi}\fy_{*}(x-\sqrt{t}z)dz 
=: u_{0}^{I} + u_{0}^{X},
}
and writing 
\EQ{\label{est-u_0-11-1}
	u_0^X \pt =\int_{B^C}\frac{e^{-|z|^2/4}}{4\pi}\large[\fy_*(x) + \fy_{*}(x-\sqrt{t}z)-\fy_*(x)\large]dz,}
one can apply the mean value theorem, 
and use \eqref{sing-phi-d} to write
\EQ{\label{estimate-fy-z-2}
	|\fy_{*}(|x-\sqrt{t}z|)-\fy_{*}(|x|)| 
	\lec \sup_{\delta|x|\leq|y|\leq|x|}|\fy_{*}^{\prime}(|y|)| \big||x - \sqrt{t}z| - |x|\big| 
	\lesssim \frac{\sqrt{t}|z|}{r|\log r|^{1/2}},
}
and then conclude that 
\EQ{\label{est-u_0-11}
	u_0^X \pt 	\pn = \int_{B^C}\frac{e^{-|z|^2/4}}{4\pi}\BR{\fy_{*}(x)+O(\frac{\sqrt{t}|z|}{r|\log r|^{1/2}})}dz
	\pr=\fy_{*}(x)(1-\te_B) + O(\sqrt{t/r^2}|\log r|^{-1/2}),}
where we set
\EQ{ 
	\te_B:=\int_B \frac{e^{-|z|^2/4}}{4\pi}dz \in (0,1).}
%
%
%For $z\in B^C$, we have $|x-\sqrt{t}z|\geq \delta |x|$. 
%Since $\fy_{*}$ is monotone decreasing in $r = |x|$, 
%we see that 
%\EQ{ \label{estimate-fy-z-1}
%\fy_{*}(x - \sqrt{t}z) \leq \fy_{*}(x)
%}
%if $|x - \sqrt{t}z| \geq |x|$.
%For $z \in B^{C}$ satisfying 
%$|x - \sqrt{t}z| \leq |x|$, 
%we have $|x - \sqrt{t}z| \sim |x|$. 
%This together with \eqref{sing-phi} yields that    
%\EQ{\label{estimate-fy-z-2}
%|\fy_{*}(|x-\sqrt{t}z|)-\fy_{*}(|x|)| 
%\lec |\fy_{*}^{\prime}(r)| \big||x - \sqrt{t}z| - |x|\big| 
%\lesssim \frac{\sqrt{t}|z|}{r|\log r|^{1/2}}.
%}
%It follows from \eqref{estimate-fy-z-1} and \eqref{estimate-fy-z-2} that 
%\EQ{\label{est-u_0-11}
% u_0^X \pt =\int_{B^C}\frac{e^{-|z|^2/4}}{4\pi}\fy_{*}(x-\sqrt{t}z)dz
% \pn \leq \int_{B^C}\frac{e^{-|z|^2/4}}{4\pi}\BR{\fy_{*}(x)+O(\frac{\sqrt{t}|z|}{r|\log r|^{1/2}})}dz
% \pr=\fy_{*}(x)(1-\te_B) + O(\sqrt{t/r^2}|\log r|^{-1/2}),}
%where 
%\EQ{ 
% \te_B:=\int_B \frac{e^{-|z|^2/4}}{4\pi}dz \in (0,1).}
Now, for $z\in B$, it follows that 
\EQ{ \label{est-u_0}
|\sqrt{t} z| \geq |x| - |x - \sqrt{t}z| \geq (1 - \delta)|x|.
}
Thus, we have $|z|^2 \geq \nu (1- \delta)^{2}$, and so
\EQ{\label{est-u_0-1}
 u_0^I =\int_B \frac{e^{-|z|^2/4}}{4\pi}\fy_{*}(x-\sqrt{t}z)dz
 \le \frac{e^{-\nu(1-\de)^{2}/4 }}{4\pi t} 
\int_{|y|^2<t\de^2\nu} \fy_{*}(y)dy.}

\par
Second, we estimate $u_{0}^I$ in the region $\nu > 8 \log \ell \gg 1$.  
For $0<a\ll 1$ uniformly we have 
\EQ{\label{est-u_0-2}
 \pt\frac{1}{\pi} \int_{|y|^2<a}\fy_{*}(y)dy
 = \int_0^a (-\log s - 2\log|\log s| + O(|\log s|^{-1}\log|\log s|))^{1/2}ds 
 \pr= \int_{|\log a|}^\I (t-2\log t +O(t^{-1}\log t))^{1/2} e^{-t}dt
 \pr=a\BR{(|\log a|-2\log|\log a|)^{1/2}+\frac12|\log a|^{-1/2}+O(|\log a|^{-3/2}\log|\log a|)}
 \pr=a\BR{|\log a|-2\log|\log a|+1+O(|\log a|^{-1}\log|\log a|)}^{1/2},}
by partial integration on $e^{-t}$. 
Note that $t \de^{2} \nu = \delta |x|^{2} \leq \delta \varepsilon^{2} \ll 1$. 
Plugging the formula \eqref{est-u_0-2} 
into \eqref{est-u_0-1} yields
\EQ{\label{est-u_0-3}
 u_0^I \lec e^{-\nu (1-\de)^{2}/4}\de^2\nu(|\log(t\de^2\nu)|-2\log|\log(t\de^2\nu)|+O(1))^{1/2}.}
In the region $\nu>8\log\ell\gg 1$, 
we have $\ell = |\log t| < e^{\nu/8}$, yielding 
\EQ{
|\log(t\de^2\nu)|-2\log|\log(t\de^2\nu)| + O(1)
\pt \lesssim |\log(t\de^2\nu)| 
\pr \leq |\log t| + 2|\log \de| + |\log \nu|
\pr \leq e^{\nu/8} + 2 |\log \de| + |\log \nu| 
\pr \lesssim e^{\nu/8} + 2 |\log \de|.  
}
By \eqref{est-u_0-3}, we get the first intermediate estimate
$u_0^I \lec \nu e^{- \nu/6}$. 
Moreover, using the fact that $ |\log t| < e^{\nu/8}$ again, 
we obtain
\EQ{
|2 \log r| = - \log r^{2} = - \log (\nu t) 
= - \log \nu + |\log t| \lesssim e^{\nu/8},
}
and therefore, 
\EQ{\label{est-u_0-12}
  u_0^I \ll |\log r|^{-1/2}\nu^{-1/2}. 
}
Finally, observe that in the same region $\nu>8\log\ell\gg 1$, we have  $r^{2} \gg t$ and 
\EQ{\label{est-u_0-15}
r^{2} |\log r| = -\frac{r^{2} \log r^{2}}{2} \gg - \frac{t \log t}{2} 
= \frac{t \ell}{2}.  
}
This together with \eqref{est-u_0-10}, 
\eqref{est-u_0-11} and \eqref{est-u_0-12} yields that 
for $\nu>8\log\ell\gg 1$, we have obtained 
\EQ{ \label{est-u_{0}-l}
 u_0 - \fy_{*}(x) \lec  |\log r|^{-1/2}\nu^{-1/2} \ll \ell^{-1/2}.
}

\par 
Thus it remains to estimate $u_0^I$ in the region 
\EQ{
 1 \ll \nu:=\frac{|x|^2}{t} < 8\log\ell.}
Let $\z:=z-x/\sqrt{t}$. On $z\in B$, we have $|\z|<\de\sqrt{\nu}\ll\ell$. 
Thus, by \eqref{asymptotic-phi}, the integrand is expanded 
\EQ{ \label{estimate-fy-3}
 \pt\fy_{*}(x-\sqrt{t}z)=\fy_{*}(\sqrt{t}\z)
 \pr=(-\log t - 2\log|\log t|-\log|\z|^2-2\log|1-\ell^{-1}\log|\z|^2|+O(1))^{1/2}
 \pr=(\ell - 2\log\ell + O(\langle \log|\zeta| \rangle))^{1/2}
 \pr=(\ell - 2\log\ell)^{1/2} + O(\ell^{-1/2} \langle \log |\zeta| \rangle).}
Note that the above $O$ terms can be bigger than the main part, and indeed unbounded as $\zeta\to 0$. The last expansion is nevertheless valid, using that the $O$ term on the 3rd line is positive. 
We have
\EQ{
\int_{B} e^{- \frac{|z|^{2}}{4}} \langle \log |\zeta| \rangle dz
\leq \int_{|\z| \leq \delta \sqrt{\nu}}  \langle \log |\zeta| \rangle d \z 
\lesssim \delta^{2} \nu \langle \log \delta^{2} \nu \rangle 
\lesssim 1.  
}
This together with \eqref{estimate-fy-3} yields that 
\EQ{ \label{est-u_0-14}
 u_0^I 
= \int_{B} \frac{e^{-|z|^{2}}/4}{4\pi} \fy_{*}(x - \sqrt{t}z) dz 
\pt = (\ell-2\log\ell)^{1/2} \int_B\frac{e^{-|z|^2/4}}{4\pi} dz +O(\ell^{-1/2}) 
\pr \leq \fy_{*}(\sqrt{t})\int_B\frac{e^{-|z|^2/4}}{4\pi} dz +O(\ell^{-1/2}).}
Hence, by \eqref{est-u_0-10}, \eqref{est-u_0-11} 
and \eqref{est-u_0-14}, 
we have   
\EQ{
 u_0 \le \fy_{*}(x) + (\fy_{*}(\sqrt{t})-\fy_{*}(x))\int_B \frac{e^{-|z|^2/4}}{4\pi} dz + O(\ell^{-1/2}),}
where the remainder on $B$ is estimated by 
\EQ{
 \pt \fy_{*}(\sqrt{t})-\fy_{*}(x) \lec \frac{-\log\sqrt{t}+\log r}{\ell^{1/2}} \lec \frac{\log\nu}{\ell^{1/2}},
 \pr \int_B e^{-|z|^2/4}dz \leq e^{-\nu(1-\de)^{2}/4}\nu < e^{-\nu/8}.}
Here, we have used \eqref{est-u_0} in the second inequality. 
This yields that 
\EQ{\label{est-u_{0}-m}
u_{0} \le \fy_{*}(x) + O(\ell^{-\frac{1}{2}}). }
From \eqref{est-u_{0}-l}, \eqref{est-u_{0}-s} and 
\eqref{est-u_{0}-m}, we see that \eqref{est-u_0-claim} 
holds.  
\end{proof}

From Lemma \ref{first-iteration}, we have obtained, denoting $\ro:=|\log r^2|$,  
\EQ{ \label{est u0}
 \max(t, |x|^2) <\e^{2} \pt\implies 
u_0 \le \min(\fy_{*}(\sqrt{t}),\fy_{*}(x))+O(\ell^{-1/2})
 \pr\implies u_0^2 \le \min(\ell - 2\log\ell,\ro-2\log\ro)+O(1)
 \pr\implies e^{u_0^2} \lec \min(\frac{1}{t\ell^2},\frac{1}{r^2\ro^2})
 \pr\implies |u_0|e^{u_0^2} \lec \min(\frac{1}{t\ell^{3/2}},\frac{1}{r^2\ro^{3/2}}), \pq u_0^2e^{u_0^2} \lec \min(\frac{1}{t\ell},\frac{1}{r^2\ro}).}
%Since the functions $(s (- \log s)^{3/2})^{-1}$ and $(s (-\log s))^{-1}$ 
%are increasing for small $s>0$, 
One can use the radial monotonicity of $u_0$ (cf.~Lemma \ref{monotonicity}) 
to extend, to all $x\in\R^2$,  the bounds of 
the functions $u_0e^{u_0^2}$ and $u_0^2e^{u_0^2}$ 
as follows: 
\EQ{ \label{est-F}
 t<\e^{2} \implies u_0 \le \sqrt{\ell}, \pq \pt u_0e^{u_0^2} \lec [(t+r^2)^{-1}+\e^{-2}]|\log \min\{t + r^{2}, \e^{2}\}|^{-3/2}=:F_0,
 \pr u_0^2e^{u_0^2} \lec [(t+r^2)^{-1}+\e^{-2}]|\log \min\{t + r^{2}, \e^{2}\}|^{-1}=:F_0'.}
Hence by the mean value theorem, for any functions $v_0$ and $v_1$, we have, 
for $t<\e$, that  
\EQ{\label{est-F-diff}
 \sqrt{\ell}(|v_0|+|v_1|)\lec 1 \implies |f_{0}(u_0+v_0)-f_{0}(u_0+v_1)| \pt \lec u_0^2e^{u_0^2}|v_0-v_1| 
 \pr\lec F_0'|v_0-v_1|, } 
where $f_{0}(u) = u (e^{u^{2}} -1)$. 

\par
To estimate the second iteration, we prepare the following:
\begin{lem} \label{integral-formula}
Let $\alpha > 0$ and $0 < \e < 1$. 
For any $(t, r) \in (0, \e^{2}) \times (0, \infty)$, 
there exists a positive constant $C_{*}$ such that 
\EQ{
\label{outer to iter1}
 \int_0^t e^{(t-s)\De}\frac{1}{s+r^2}|\log \min\{s + r^{2}, \e^{2}\}|^{- \alpha}ds 
\leq C_{*} \ell^{-\alpha},  
}
\EQ{
\label{outer to iter2}
 \int_0^t e^{(t-s)\De}|\log \min\{s + r^{2}, \e^{2}\}|^{- \alpha}ds 
\leq C_{*} t \ell^{-\alpha}
}
\end{lem}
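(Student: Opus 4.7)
My plan is to prove both estimates by computing the heat-semigroup action pointwise via the explicit 2D Gaussian kernel, followed by a change of variables that separates the logarithmic weight from the other factors. Writing $\tau=t-s$ and $y=x+\sqrt{2\tau}z$,
\EQN{
[e^{\tau\De}g(s,|\cdot|)](x)=\int_{\R^2}\frac{e^{-|z|^2/2}}{2\pi}\,g\bigl(s,|x+\sqrt{2\tau}z|\bigr)\,dz,
}
where $g(s,\rho)=|\log\min\{s+\rho^2,\e^2\}|^{-\alpha}$, optionally multiplied by $(s+\rho^2)^{-1}$. The integrand is a radial function of $y$ which is non-decreasing in $|y|$ for the logarithmic factor (and non-increasing for $(s+\rho^2)^{-1}$), a fact I would exploit throughout.

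For \eqref{outer to iter2}, I would split the $z$-integration into an inner disk $|z|\le A$ and its complement. On the inner disk, the pointwise inequality $|x+\sqrt{2\tau}z|^2\le 2(r^2+2\tau A^2)$ combined with the monotonicity of $g$ in its second argument yields $g(s,|x+\sqrt{2\tau}z|)\le|\log\min\{s+2r^2+4\tau A^2,\e^2\}|^{-\alpha}\lec\ell^{-\alpha}$, since $s+r^2+\tau A^2\lec t$ when $A=O(1)$ and $r^2\lec t$. The outer contribution is dominated by $(2|\log\e|)^{-\alpha}e^{-A^2/2}$, which we absorb by choosing $A$ of order $\sqrt{\log(\ell/|\log\e|)}$, so that the Gaussian tail becomes $\lec\ell^{-\alpha}$. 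Integration in $s\in(0,t)$ then produces the extra factor $t$ in the statement.

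For \eqref{outer to iter1}, the additional weight $(s+|y|^2)^{-1}$ produces a combined singularity at $(s,y)=(0,0)$ which requires more care. I would apply Fubini to swap the $s$-integral with the Gaussian integration in $z$, and for each fixed $z$ substitute $v=s+4(t-s)|z|^2/4$ in the $s$-integral. This reduces the inner integral to $\int v^{-1}|\log\min\{v,\e^2\}|^{-\alpha}\,dv$ over an interval of length $\lec t$, which after the further substitution $w=-\log v$ becomes $\int w^{-\alpha}\,dw$ over an interval of length $\lec\log\LR{|z|}$ near $w=\ell$. This yields $\lec\ell^{-\alpha}\log\LR{|z|}$, and integrating against the Gaussian weight $(2\pi)^{-1}e^{-|z|^2/2}$ produces the claimed $C_*\ell^{-\alpha}$.

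The main obstacle is controlling the estimates uniformly in the spatial variable $r$, particularly when $r\gec\sqrt{t}$: in that regime the Gaussian kernel is concentrated far from the logarithmic singularity at the origin, and the naive pointwise bound degrades. I would handle this by splitting the $y$-integration into $|y|<r/2$ and $|y|\ge r/2$, exploiting the exponential Gaussian decay $e^{-r^2/(16(t-s))}$ to suppress the inner contribution and the polynomial bound $(s+|y|^2)^{-1}\le 4/r^2$ on the outer. The crucial observation that $t<\e^2$ means that $t$ itself is small enough to dominate the elementary constants arising from $|\log\e|$, which lets the bounds close with a universal $C_*$.
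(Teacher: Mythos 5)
Your overall strategy — representing the heat semigroup by the Gaussian kernel and reducing to a one-dimensional integral in a variable combining $s$ and $|z|$ — is in the same spirit as the paper's proof, but the details differ and a few of your intermediate claims don't hold as stated.

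The paper's route is: (a) observe that $(s+\rho^2)^{-1}|\log\min\{s+\rho^2,\e^2\}|^{-\alpha}$ is radially \emph{decreasing} (provided $2|\log\e|>\alpha$), so by Lemma~\ref{monotonicity} it suffices to bound the left side of \eqref{outer to iter1} at $x=0$; (b) pass to polar coordinates and substitute $s\to t-s$, $\rho^2\to\sigma\to s\eta$, obtaining $\int_0^\I\int_0^t\frac{e^{-\eta/4}}{t-s+s\eta}|\log\min\{t-s+s\eta,\e^2\}|^{-\alpha}\,ds\,d\eta$; (c) split $\eta\lessgtr 1/\sqrt t$; (d) on the inner piece bound the log-factor \emph{uniformly} by $O(\ell^{-\alpha})$ (using $t-s+s\eta<\sqrt t$) and then check the residual double integral $\int\!\!\int \frac{e^{-\eta/4}}{t-s+s\eta}ds\,d\eta=\int e^{-\eta/4}\frac{\log\eta}{\eta-1}d\eta=O(1)$; (e) on the outer piece use the Gaussian tail $e^{-1/(4\sqrt t)}$. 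Bounding the log-factor \emph{before} doing the $s$-integral is what makes the computation clean. In your proposal you do the $s$-integral first and then claim that, after $w=-\log v$, you obtain ``$\int w^{-\alpha}dw$ over an interval of length $\lesssim \log\LR{|z|}$ near $w=\ell$.'' That is not accurate: for $|z|$ small the $w$-interval is $[\ell,\ell+|\log(2|z|^2)|]$, whose length $\approx 2|\log|z||$ blows up as $|z|\to 0$ (whereas $\log\LR{|z|}\to 0$). The estimate can still be closed — the set $|z|<\sqrt{t}$ where the length exceeds $\ell$ carries Gaussian mass $O(t)$ and its contribution is $O(t\,\ell^{1-\alpha})\ll\ell^{-\alpha}$ — but that case analysis is absent from your argument, and the asserted intermediate bound $\lesssim\ell^{-\alpha}\log\LR{|z|}$ is simply false in the small-$|z|$ regime.

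A second, more structural gap concerns uniformity in $r$. You acknowledge the difficulty when $r\gec\sqrt t$ and propose a $|y|<r/2$ vs.\ $|y|\ge r/2$ split, but the fix you describe relies on the weight $(s+|y|^2)^{-1}\le 4/r^2$, which is present only in \eqref{outer to iter1}. For \eqref{outer to iter2} there is no such weight: when $r\gg\e$ the integrand is identically $(2|\log\e|)^{-\alpha}$ on $|y|\ge\e$, so $\int_0^t e^{(t-s)\De}|\log\min\{s+|\cdot|^2,\e^2\}|^{-\alpha}\,ds\to t\,(2|\log\e|)^{-\alpha}$ as $|x|\to\I$, which is \emph{not} $O(t\,\ell^{-\alpha})$ uniformly as $t\to 0$. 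In other words, \eqref{outer to iter2} cannot hold with a single constant for all $r\in(0,\I)$; it is a statement about the value at $x=0$ (the paper's change of variables computes precisely that). The extension to all $x$ in the actual application works because the combined weight $F_0'=[(s+r^2)^{-1}+\e^{-2}]|\log\min|^{-1-\al}$ is radially decreasing as a \emph{product}, so Lemma~\ref{monotonicity} applies to it directly even though it fails for the $\e^{-2}$-piece alone. Your proposal, which tries to prove \eqref{outer to iter2} pointwise for every $r$, is therefore aiming at something that is not true, and the step ``since $s+r^2+\tau A^2\lec t$ when $A=O(1)$ and $r^2\lec t$'' silently imports a restriction that the lemma statement does not provide. (There is also a small inconsistency: you use $A=O(1)$ for the inner estimate but later take $A\sim\sqrt{\log(\ell/|\log\e|)}$, and to get $e^{-A^2/2}\lec\ell^{-\alpha}$ you would need $A\sim\sqrt{2\alpha\log\ell}$.) The cleanest repair is the paper's: prove the bound at $x=0$ via the explicit kernel, and invoke the radial monotonicity of the relevant (combined) weight to pass to general $x$.
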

\begin{proof}
From Lemma \ref{monotonicity}, we have
\EQ{
\pt \int_0^t e^{(t-s)\De}\frac{1}{s+r^2}|\log \min\{s + r^{2}, \e^{2}\}|^{- \alpha} ds 
\pr \leq \int_{0}^{t} \int_{0}^{\infty} 
\frac{re^{- \frac{r^{2}}{4(t-s)}}}{2(t-s)(s + r^{2})}
|\log \min\{s + r^{2}, \e^{2}\}|^{- \alpha} 
dr ds.  
}
Then, the integral is estimated using the following formula
\EQ{ 
 \pt \int_0^t e^{(t-s)\De}\frac{1}{s+r^2} |\log \min\{s + r^{2}, \e^{2}\}|^{- \alpha} ds 
 \pr\le\int_0^t \int_0^\I\frac{e^{-\frac{\s}{4s}}}{4s(t-s+\s)}
|\log \min\{t - s + \s, \e^{2}\}|^{- \alpha} d\s ds
 \pr=\int_0^\I \int_0^t \frac{e^{-\y/4}}{t-s+s\y} 
|\log \min\{t -s+\y, \e^{2}\}|^{- \alpha}ds \frac{d\y}{4}
% \pr= \int_0^\I \frac{\log\y}{\y-1}e^{-\y/4}d\y/4=:C_* \in (0,\I),
}
where the variables are changed by $s\to t-s$, $r^2\to\s\to s\y$. 
Then, we split the integral as follows. 
\EQ{
 \pt \int_0^\I \int_0^t \frac{e^{-\y/4}}{t-s+s\y} 
|\log \min\{t-s+s\y, \e^{2}\}|^{- \alpha}ds \frac{d\y}{4}
 \pr =  \int_0^{1/\sqrt{t}}\int_0^{t} \frac{e^{-\y/4}}{t-s+s\y} 
|\log \min\{t-s+s\y, \e^{2}\}|^{- \alpha}ds \frac{d\y}{4}
 \pr + \int_{1/\sqrt{t}}^{\I} \int_0^t \frac{e^{-\y/4}}{t-s+s\y} 
|\log \min\{t-s+s\y, \e^{2}\}|^{- \alpha}ds \frac{d\y}{4}
 =: I + II. 
}

We first estimate $I$.   
Since $t -s + s \y \leq \sqrt{t}$ for $0 < s < t$ and $0 < \y < 1/\sqrt{t}$, 
we have 
\EQ{ \label{est-int I}
I 
\pt \leq \int_0^{1/\sqrt{t}}\int_0^{t} \frac{e^{-\y/4}}{t-s+s\y} 
|\log \min\{\sqrt{t}, \e^{2}\}|^{- \alpha}ds \frac{d\y}{4}
\pr \lesssim \ell^{-\alpha} \int_{0}^{1/\sqrt{t}} \int_{0}^{t} 
\frac{e^{-\y/4}}{t-s+s\y} ds d\y
= \ell^{-\alpha} \int_{0}^{1/\sqrt{t}} e^{-\y/4}\frac{\log \y}{\y -1} d\y 
\lesssim \ell^{-\alpha}. 
}
Next, we estimate $II$. 
We note that $|\log \min\{s + r^{2}, \e^{2}\}|^{- \alpha} 
\leq (- \log \e^{2})^{-\alpha}$ and $t -s + s\y \geq t$ 
for $0 < s < t$ and $\y \geq 1/\sqrt{t}$. 
Therefore, we have 
\EQ{ \label{est-int II}
II 
\lesssim (- \log \e^{2})^{-\alpha} 
\int_{1/\sqrt{t}}^{\I} e^{- \y/4}\int_0^t \frac{1}{t} 
ds d\y \lesssim e^{- \frac{1}{4\sqrt{t}}}. 
}
From \eqref{est-int I} and \eqref{est-int II}, 
we obtain \eqref{outer to iter1}. 
W can obtain \eqref{outer to iter2} by the similar argument above. 
This completes the proof. 
\end{proof}
Using Lemma \ref{integral-formula}, we shall show the following:
\begin{lem} \label{estimate-D}
Let $\e>0$ be given by Lemma \ref{integral-formula}. 
For any space-time function $v$ on $(0,\e^{2})\times \R^2$, let
\EQ{
 D[v]:=\int_0^t e^{(t-s)\De}f_0((u_0+v)(s))ds.}
Then, there exists a positive constant $C_{0}$ such that  
\EQ{ \label{Duh iter1-1}
 \pt |D[0]| \le C_{0} \ell^{-3/2}.   
} 
Moreover, for any $C_1\in(0,\I)$, there exists $C_2\in(0,\I)$ such that for any $v_0$ and $v_1$ satisfying  
\EQ{
 \sup_{0<t<\e}|\log t|^{1/2}\|v_j(t)\|_{L^{\infty}} \le C_1 \pq (j=0,1), }
and for any $\alpha \ge -1$, we have
\EQ{ \label{Duh diff-1}
 |D[v_0]-D[v_1]| 
\le C_2 \ell^{-1-\al}\sup_{0<s<t}|\log s|^\al\|v_0(s)-v_1(s)\|_\I.  
}
\end{lem}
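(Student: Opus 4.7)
The strategy is to reduce both estimates to the linear heat regularization bounds of Lemma \ref{integral-formula} via the pointwise nonlinear bounds \eqref{est-F} and \eqref{est-F-diff}. For \eqref{Duh iter1-1}, start from $|D[0]|\le\int_0^t e^{(t-s)\Delta}|f_0(u_0(s))|\,ds$ and observe that $|f_0(u_0)|=u_0(e^{u_0^2}-1)\le u_0 e^{u_0^2}\le CF_0$ by \eqref{est-F}. Writing $F_0=[(s+r^2)^{-1}+\e^{-2}]|\log\min(s+r^2,\e^2)|^{-3/2}$ and applying Lemma \ref{integral-formula} with exponent $\alpha=3/2$, the $(s+r^2)^{-1}$ piece yields $C\ell^{-3/2}$ directly, and the $\e^{-2}$ piece contributes $\e^{-2}\cdot Ct\ell^{-3/2}\le C\ell^{-3/2}$ since $t<\e^2$.

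For \eqref{Duh diff-1}, the hypothesis $|\log s|^{1/2}\|v_j(s)\|_\infty\le C_1$ gives $\sqrt{|\log s|}(|v_0(s)|+|v_1(s)|)\lesssim 1$, so \eqref{est-F-diff} applies and yields $|f_0(u_0+v_0)-f_0(u_0+v_1)|\le CF_0'|v_0-v_1|$. Setting $M:=\sup_{0<s<t}|\log s|^\alpha\|v_0(s)-v_1(s)\|_\infty$ gives $\|v_0(s)-v_1(s)\|_\infty\le M|\log s|^{-\alpha}$, reducing the statement to the estimate $\int_0^t e^{(t-s)\Delta}F_0'(s,r)|\log s|^{-\alpha}\,ds\le C\ell^{-1-\alpha}$ uniformly in $x$. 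The key reduction is the elementary identity $|\log s|^{-\alpha}=|\log s|\cdot|\log s|^{-1-\alpha}\le\ell^{-1-\alpha}|\log s|$, valid for every $\alpha\ge-1$ since $|\log s|\ge\ell$ and $-1-\alpha\le 0$; this collapses the entire $\alpha$-range uniformly to the single uniform bound $\int_0^t e^{(t-s)\Delta}F_0'(s,r)|\log s|\,ds\le C$.

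To establish that uniform bound, decompose $|\log s|=|\log\min(s+r^2,\e^2)|+R_0$ with $R_0\ge 0$, where $R_0=\log(1+r^2/s)$ on $\{s+r^2\le\e^2\}$ and $R_0=\log(\e^2/s)$ otherwise. Then $F_0'|\log s|=[(s+r^2)^{-1}+\e^{-2}]+F_0'R_0$, and the first summand heat-integrates to $O(1)$ by Lemma \ref{integral-formula} with $\alpha=0$ together with $\e^{-2}t\le 1$.

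The main obstacle is the remainder $\int_0^t e^{(t-s)\Delta}F_0'R_0\,ds$, since $R_0$ may be comparable to $|\log s|$ when $r^2\gg s$. On $\{s+r^2>\e^2\}$ the heat kernel furnishes Gaussian tail decay $e^{-(\e^2-s)/(4(t-s))}$ that absorbs the crude bound $F_0'R_0\lesssim \e^{-2}|\log s|$; on $\{s+r^2\le\e^2,\ r^2\le s\}$ one has $R_0\le\log 2$ so that Lemma \ref{integral-formula} with $\alpha=1$ suffices. On the delicate region $\{r^2>s\}\cap\{s+r^2\le\e^2\}$, where $R_0\lesssim|\log s|-|\log r^2|$, the resulting integral carries a $\log|\log s|$-type logarithmic singularity as $s\to 0$; after the substitution $s=te^{-w}$ the outer $s$-integral transforms into $\int_0^\infty(\ell+w)\log(1+w/\ell)e^{-w}\,dw$, whose integrand is pointwise dominated by $(w+w\log w)e^{-w}$ and hence yields a constant independent of $t$.
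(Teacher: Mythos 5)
Your argument for \eqref{Duh iter1-1} matches the paper exactly. For \eqref{Duh diff-1} you take a genuinely different route. The paper's proof replaces the product
\[
|\log\min(s+r^2,\e^2)|^{-1}\,|\log s|^{-\al}
\]
by $|\log\min(s+r^2,\e^2)|^{-1-\al}$ and then cites Lemma \ref{integral-formula} with exponent $1+\al$; this merge uses $|\log s|\ge|\log\min(s+r^2,\e^2)|$, which gives the inequality in the right direction only for $\al\ge 0$ (the range actually used in the application). Your alternative — writing $|\log s|^{-\al}\le\ell^{-1-\al}|\log s|$ — is a clean, elementary step that covers the full stated range $\al\ge-1$ uniformly, and in that sense is more robust than the paper's merge. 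The trade-off is that you have now peeled the whole $\al$-dependence out of the heat integral and are left with the single, $\al$-independent claim
\[
\int_0^t e^{(t-s)\De}F_0'(s,\cdot)\,|\log s|\,ds \lesssim 1,
\]
which is \emph{not} an instance of Lemma \ref{integral-formula}: the lemma is stated for $\al>0$ and here you effectively need its $\al=-1$ analogue (an extra factor $|\log s|$ rather than a negative power of $|\log\min|$). This is where your proposal stops being rigorous.

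Concretely, two gaps remain. First, even the ``first summand'' $\int_0^t e^{(t-s)\De}(s+r^2)^{-1}ds\lesssim 1$ is the $\al=0$ case of Lemma \ref{integral-formula}, which is outside its stated hypothesis; it does follow from the same proof (the factor $\ell^{-\al}$ pulled out in region $I$ becomes $1$), but you should either prove it or at least flag that the lemma's argument extends. Second, and more seriously, your treatment of the ``delicate region'' $\{s<r^2,\ s+r^2\le\e^2\}$ is only a sketch: after the substitution $s=te^{-w}$ you assert the remaining $r$-integration collapses to produce $\int_0^\I(\ell+w)\log(1+w/\ell)e^{-w}\,dw$, but you do not show how the heat-kernel Gaussian and the $r^{-2}/|\log r^2|$ weight produce precisely that factor $\log(1+w/\ell)$, and the asserted domination by $(w+w\log w)e^{-w}$ is left unjustified. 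The cleaner path (and closer to the paper's mechanism) would be to run the very change of variables used in the proof of Lemma \ref{integral-formula} ($s\mapsto t-s$, then $r^2=s\eta$), bound $|\log\min(t-s+s\eta,\e^2)|^{-1}\lesssim\ell^{-1}$ in the region $\eta\le 1/\sqrt{t}$, and estimate $\int_0^t|\log(t-s)|/(t-s+s\eta)\,ds$ explicitly; this yields the uniform bound without any hand-waving. As it stands, your proposal correctly identifies the right reduction but does not actually establish the uniform estimate on which it rests.
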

\begin{proof}
\eqref{est-F} and Lemma \ref{integral-formula} yield
\EQ{ \label{Duh iter1}
 |D[0]| \pt\le \int_0^t e^{(t-s)\De}|f_0(u_0(s))|ds
  \lec \int_0^t e^{(t-s)\De}F_0(s)ds
 \pr\lesssim 
\ell^{-3/2} + \varepsilon^{-2} t \ell^{-3/2} 
\lec \ell^{-3/2}.}  
Similarly, \eqref{est-F-diff} and Lemma \ref{integral-formula} yield
\EQ{
 \pt|D[v_0]-D[v_1]| \le \int_0^t e^{(t-s)\De}|f_0(u_0+v_0))-f_0(u_0-v_0)|ds
  \pr\lec \int_0^t e^{(t-s)\De}F_0'(s)\|v_1(s)-v_0(s)\|_\I ds
  \pr\lec \int_0^t e^{(t-s)\De}[(s+r^2)^{-1}+\e^{-2}]
|\log \min\{s + r^{2}, \e^{2}\}|^{-1-\al}
%|\log s|^{-\al}ds \times \pr \times 
\sup_{0<s<t}|\log s|^\al\|v_0(s)-v_1(s)\|_\I
  \pr\lec \ell^{-1-\al}\sup_{0<s<t}|\log s|^\al\|v_0(s)-v_1(s)\|_\I,}
for any $\al\ge-1$. 
Noting that the implicit constants are independent of $v_0,v_1,\al,t$, we arrive at the desired conclusion. 
\end{proof}
We are now in a position to prove 
Theorem \ref{regular-thm}
\begin{proof}[Proof of Theorem \ref{regular-thm}]
We put  
\EQ{
E[v] := \int_{0}^{t} e^{(t-s)\Delta} L(u_{0} + v)(s)ds, \qquad 
L(u) := m_{*} \chi(u)u 
}
and $I[v] := D[v] - E[v]$. 
Then, we are naturally lead to consider the mapping $v\mapsto I[v]$ for $v$ 
in the following set 
\EQ{
 B_T^{1/2}:=\{v \in C([0,T]\times \R^2) \mid \|v\|_{X_T^{1/2}}:=\sup_{0<t<T}\ell^{1/2}\|v(t)\|_\I\le 1\},}
for some constant $T\in(0,\e)$ to be determined, which is a closed ball of a Banach space with the norm $X_T^{1/2}$. The estimates on $D[\cdot]$ in \eqref{Duh iter1-1} and \eqref{Duh diff-1} with $C_1=1$ imply 
\EQ{
 \pt \|D[v_0]-D[v_1]\|_{X_T^{1/2}} \le C_2|\log T|^{-1}\|v_0-v_1\|_{X_T^{1/2}},
 \pr \|D[v_0]\|_{X_T^{1/2}} \le \|D[0]\|_{X_T^{1/2}}+\|D[0]-D[v_0]\|_{X_T^{1/2}} \le (C_0+C_2)|\log T|^{-1}, 
}
for any $v_0,v_1\in B_T^{1/2}$. 
Moreover, we can easily obtain  
\EQ{
 \pt \|E[v]\|_\I \le t\|L\|_\I,
 \pr \|\int_0^t e^{(t-s)\De}(L(v_0)-L(v_1))(s)ds\|_\I
 \le t\|L\|_{Lip}\|v_0-v_1\|_\I. 
} 
Hence if $T>0$ is small enough then $v\mapsto I[v]$ is a contraction mapping on $B_T^{1/2}$, 
so there is a unique fixed point $v\in B_T^{1/2}$. 
Then $u=u_0+v$ is a local (strong) solution on $0<t<T$ of 
\EQ{
 \dot{u} - \De u + m_{*} \chi(u) u = f_0(u), \pq u(0)=\fy_{*}.} 
Lemma \ref{estimate-D} also implies that $v\in X_T^{3/2}$. 
This completes the proof. 
%Note that the above argument crucially relies on the asymptotic form of $\fy_{*}$ as %$x\to 0$, but it is not needed to be a local (in space) solution to the static problem. 
\end{proof}
We can prove Theorem \ref{main-thm} immediately from 
Theorems \ref{exist-sing-entire} and \ref{regular-thm}. 
\begin{proof}[Proof of Theorem \ref{main-thm}]
We also denote $\phi_{m_{*}}$, which is the stationary singular soliton to 
\eqref{eq-Heat-exp-m} obtained in Theorem 
\ref{exist-sing-entire}, by $\fy_{*}$. 
Let $u_{S}(t) = \varphi_{*}$ and $u_{R}(t)$ be the regular solution to 
\eqref{eq-Heat-exp-m}, obtained in Theorem \ref{regular-thm}. 
We see that $u_{S}(0) = u_{R}(0) = \varphi_{*}$. 
However, $u_{S}(t) = \varphi_{*} \notin L^{\infty}
(\mathbb{R}^{2})$ while $u_{R}(t) \in L^{\infty}(\mathbb{R}^{2})$ for all $t \in (0, T)$. 
This implies that $u_{S}(t) \neq u_{R}(t)$ for all $t \in (0, T)$. 
This completes the proof. 
\end{proof}

\appendix
\section{Maximum point of solutions to the linear Heat equation}
In this appendix, we shall give a proof of the fact, which is used 
in Section 3. 
More precisely, we show the following: 
\begin{lem} \label{monotonicity}
Let $\phi$ be a radially decreasing function. 
Set $u(t) = e^{t \Delta} \phi$. Then, 
$u(t)$ is also radially decreasing and  
\EQ{
\|u(t, \cdot)\|_{L_{x}^{\infty}} = u(t, 0). 
}
\end{lem}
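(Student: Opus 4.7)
The plan is to establish the two conclusions separately: first that $u(t,\cdot)$ is radially symmetric, and second that it is radially decreasing, from which the identification of the $L^\infty$-norm with $u(t,0)$ is immediate.

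For radial symmetry, I would exploit that the heat semigroup commutes with rigid motions: if $R\in O(2)$ and $\phi_R(x):=\phi(Rx)$, then $e^{t\Delta}(\phi\circ R)=(e^{t\Delta}\phi)\circ R$. Since $\phi$ is radial, $\phi\circ R=\phi$ for every $R\in O(2)$, so $u(t,Rx)=u(t,x)$ and $u(t,\cdot)$ is radial.

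For monotonicity, I would use a reflection (Steiner-symmetrization) argument on the explicit Gaussian representation
\EQ{
u(t,x) = \frac{1}{4\pi t}\int_{\R^2} e^{-|x-y|^2/(4t)}\phi(y)\,dy.
}
Fix $x_1,x_2\in\R^2$ with $|x_1|\le|x_2|$, and let $H$ be the hyperplane equidistant from $x_1$ and $x_2$, with reflection $y\mapsto y^*$ across $H$. Let $H^+$ denote the open half-space containing $x_1$; since $|x_1|\le|x_2|$, the origin lies in $\overline{H^+}$. Splitting the integral into $H^+$ and $H^-$ and applying the change of variables $y\mapsto y^*$ on the second piece yields
\EQ{
u(t,x_1)-u(t,x_2) = \frac{1}{4\pi t}\int_{H^+}\bigl(e^{-|x_1-y|^2/(4t)}-e^{-|x_2-y|^2/(4t)}\bigr)\bigl(\phi(y)-\phi(y^*)\bigr)dy,
}
where I used the reflection identities $|x_1-y^*|=|x_2-y|$ and $|x_2-y^*|=|x_1-y|$. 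For $y\in H^+$ one has $|x_1-y|\le|x_2-y|$, so the first factor is nonnegative, and $|y|\le|y^*|$, so by radial monotonicity of $\phi$ the second factor is nonnegative as well. Hence $u(t,x_1)\ge u(t,x_2)$, which shows $u(t,\cdot)$ is radially nonincreasing.

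The step requiring the most care is verifying the sign structure of both factors in the reflection identity; once this is cleanly set up, the rest is bookkeeping. Finally, combining radial symmetry and monotonicity, the supremum of $u(t,\cdot)$ is attained at the origin, giving $\|u(t,\cdot)\|_{L^\infty_x}=u(t,0)$.
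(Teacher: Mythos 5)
Your argument is correct, and it takes a genuinely different route from the paper. The paper proves radial monotonicity by a parabolic energy argument: it sets $v=\partial_r u$, derives the evolution equation $\dot v-\Delta v+r^{-2}v=0$, multiplies by $v_+=\max\{v,0\}$, integrates, and concludes from $v_+(0)\equiv 0$ that $\partial_t\|v_+\|_{L^2}^2\le 0$, hence $v_+\equiv 0$. You instead work directly with the Gaussian kernel and a reflection across the perpendicular bisector of $x_1$ and $x_2$, exploiting the two sign conditions ($|x_1-y|\le|x_2-y|$ and $|y|\le|y^*|$ for $y\in H^+$, the latter using $|x_1|\le|x_2|$ to place the origin in $\overline{H^+}$) to show the difference $u(t,x_1)-u(t,x_2)$ is a nonnegative integral. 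Your approach is more elementary and in some ways more robust: it requires no differentiability of $\phi$ (only monotonicity of the radial profile), no justification of the integration by parts implicit in the energy identity, and no integrability of $\phi$ or $\phi'$ in $L^2$, which the paper's energy argument tacitly assumes. The paper's method, on the other hand, generalizes more readily to variable-coefficient or nonlinear parabolic operators where no explicit kernel is available. Both are standard techniques for this kind of symmetry-preservation statement; either suffices here.
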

\begin{proof}
Note that $u$ is also radial. 
Setting $v = \partial_{r}u$, we see that 
$v$ satisfies the following: 
\EQ{ \label{eq-V}
\dot{v} - \Delta v + \frac{1}{r^{2}} v = 0. 
}
We put $v_{+} = \max\{v, 0\}$. 
Multiplying \eqref{eq-V} by $v_{+}$ and integrating the resulting 
equation over $\R^{2}$, we have 
\EQ{ \label{monotonicity-V}
\partial_{t} \|v_{+}\|_{L^{2}}^{2} = 
-\|\nabla v_{+}\|_{L^{2}}^{2} - \int \frac{1}{r^{2}} |v_{+}|^{2} dx \leq 0. 
}
From the assumption, we infer that $v_{+}(0) = 0$. 
This together with \eqref{monotonicity-V} yields that 
$v_{+} \equiv 0$ in $(0, \infty) \times \R^{2}$. 
This completes the proof. 
\end{proof}

\subsection*{Acknowledgement}
This work was done while H.K. was visiting at University of Victoria. 
H.K. thanks all members of the Department of 
Mathematics and Statistics for their warm hospitality. 
The work of S.I. was supported by NSERC grant (371637-2014).
The work of H.K. was supported by 
JSPS KAKENHI Grant Number JP17K14223. 
K.N. was supported by JSPS KAKENHI Grant Number JP17H02854.
The research of J.W. is partially supported by NSERC of Canada.

\vspace{0.5cm}

\noindent
Slim Ibrahim 
\\
Department of Mathematics and Statistics
\\
University of Victoria 
\\
3800 Finnerty Road, Victoria, B.C., Canada V8P 5C2
\\
E-mail: ibrahims@uvic.ca

\vspace{0.5cm}

\noindent
Hiroaki Kikuchi
\\
Department of Mathematics
\\
Tsuda University
\\
2-1-1 Tsuda-machi, Kodaira-shi, Tokyo 187-8577, JAPAN
\\
E-mail: hiroaki@tsuda.ac.jp

\vspace{0.5cm}

\noindent
Kenji Nakanishi 
\\
Research Institute for Mathematical Sciences
\\
Kyoto University 
\\
Oiwake Kita-Shirakawa, Sakyo Kyoto 606-8502 JAPAN
\\
E-mail: kenji@kurims.kyoto-u.ac.jp

\vspace{0.5cm}

\noindent
Juncheng Wei
\\
Department of Mathematics
\\
University of British Columbia
\\
1984 Mathematics Road
Vancouver, B.C., Canada V6T 1Z2
\\
E-mail:  jcwei@math.ubc.ca

%\keywords{Nonlinear wave equation, nonlinear Schr\"odinger equation}
%\begin{thebibliography}{10}
%\bibitem{}
%\end{thebibliography}
\end{document}